\def\bc{\begin{center}}
\def\ec{\end{center}}
\def\be{\begin{equation}}
\def\ee{\end{equation}}
\def\N{\mathbb N}
\def\R{\mathbb R}
\newtheorem{lem}{Lemma}[section]
\newtheorem{pro}[lem]{Proposition}
\newtheorem{thm}[lem]{Theorem}
\newtheorem{cor}[lem]{Corollary}
\theoremstyle{remark}
\numberwithin{equation}{section}
\begin{document}

\title[Metrical properties for continued fractions of formal Laurent series]
{Metrical properties for continued fractions of formal Laurent series}

\author{Hui Hu}
\address{School of Mathematics and Information Science, Nanchang Hangkong University, Nanchang, Jiangxi 330063, PR China\\
and
Department of Mathematics and Statistics,
La Trobe University,
Bendigo 3552,
Australia}
\email{hh5503@126.com}

\author{Mumtaz Hussain}
\address{Department of Mathematics and Statistics,
La Trobe University,
Bendigo 3552,
Australia}
\email{m.hussain@latrobe.edu.au}

\author{Yueli Yu }
\address{School of Mathematics and Statistics, Wuhan University, Wuhan, Hubei 430072, PR China}
\email{yuyueli@whu.edu.cn}

\begin{abstract}
Motivated by recent developments in the metrical theory of continued fractions for real numbers concerning the growth of consecutive partial quotients, we consider its analogue over the field of formal Laurent series.  Let $A_n(x)$ be the $n$th partial quotient of the continued fraction expansion of $x$ in the field of formal Laurent series. We consider  the sets of  $x$ such that
$\deg A_{n+1}(x)+\cdots+\deg A_{n+k}(x)~\ge~\Phi(n)$ holds for infinitely many $n$ and for all $n$ respectively, where $k\ge1$ is an integer and $\Phi(n)$ is a positive function defined on $\mathbb{N}$.  We determine the size of these sets in terms of Haar measure and Hausdorff dimension.
\end{abstract}

\keywords {Formal Laurent series, Continued fraction,  Haar measure, Hausdorff dimension}

\subjclass[2010]  {11K55, 11K50, 28A78}   
\maketitle

\addtocounter{section}{0}

\section{Introduction and statement of results}
The continued fraction expansion of a real number is an alternative (and efficient) way to the decimal representation of a real number.  Every irrational  $x\in  (0,1)$ can be uniquely expressed as a simple continued
fraction expansion as follows
\begin{equation*}
x=\frac{1}{a_{1}(x)+\displaystyle{\frac{1}{a_{2}(x)+\displaystyle{\frac{1}{
a_{3}(x)+_{\ddots }}}}}}:=[a_{1}(x),a_{2}(x),a_{3}(x),\ldots ]
\end{equation*}
where $a_{n}(x)$ are positive integers and are called the partial quotients of $x$. The metrical theory of continued fractions concerns the quantity study of growth rate of partial quotients.   This expansion can be induced by the Gauss map $T_G:[0,1)\rightarrow [
0,1)$ defined as
\begin{equation*}
T_G(0):=0,\quad T_G(x):=\frac{1}{x}\  \mathrm{ (mod}\ 1),\quad \mathrm{for}\ x\in
(0,1), \label{GaussMap}
\end{equation*}
with $a_{1}(x)=\lfloor \frac{1}{x}\rfloor $, where $\lfloor .\rfloor $
represents the floor function and $a_{n}(x)~=~a_{1}(T_G^{n-1}(x))$ for $n\geq 2$.

Let $m\geq 1$ and $\Phi :\mathbb{N}\rightarrow (1,\infty)$ be a positive function. Define the set
 \begin{align*}\label{KKWW}
\mathcal{D}_{m}(\Phi)=\left\{x\in[0, 1): \prod_{i=1}^ma_{n+i}(x)\geq \Phi(n) \ \
\mathrm{for \ infinitely \ many \ }n\in\mathbb{N}\right\}.
\end{align*}
The well-known  Borel-Bernstein theorem \cite{Be_12, Bo_12} states that the Lebesgue measure of $\mathcal{D}_{1}(\Phi )$ is  either zero or full according to the convergence or divergence of the series ${\sum_{n=1}^{\infty }}1/\Phi (n)$ respectively.
For rapidly growing function $\Phi$, the Borel-Bernstein theorem does not give any conclusive information other than Lebesgue measure zero. To distinguish between such sets Hausdorff dimension is an appropriate tool. The Hausdorff dimension of the set $\mathcal{D}_{1}(\Phi )$
has been comprehensively determined by Wang-Wu \cite{WaWu08}.

Motivation for considering the growth of product of consecutive partial quotients arose from the works of Kleinbock-Wadleigh \cite{KlWad16}  where they considered improvements to Dirichlet's theorem.They defined the set of $\phi$-Dirichlet improvable numbers for some function $\phi$ (see Section \ref{Remark Dirichlet}) and proved that the Lebesgue measure of the set of $\phi$-Dirichlet non-improvable numbers is equivalent to the Lebesgue measure of the set $\mathcal{D}_{2}(\Phi)$ with $\Phi(n)=\frac{1}{1-b^n\phi(b^n)}-1$ for some $b>1$.
In particular, it was proved that the Lebesgue measure of the set of $\phi$-Dirichlet non-improvable numbers is zero or full if the series $\sum_{n}\frac{\log{\Phi}(n)}{{\Phi}(n)}$ converges or diverges respectively, see \cite[Theorem 3.6]{KlWad16} and \cite[Corollary 3.7]{KlWad16}.
The Hausdorff measure of this set was later established in \cite{HKWW, BHS}.   Very recently the Lebesgue measure and the Hausdorff dimension of  $\mathcal{D}_{m}(\Phi)$, for any $m\geq 2$ has been determined by Huang-Wu-Xu \cite{HuWuXu}. We refer the reader to \cite{BBH1, BBH2} for a comparison between the sizes of the classical well-approximable set with the set of Dirichlet non-improvable numbers.

These recent developments on the metrical theory of continued fractions for real numbers motivated the study of the analogous theory  for the continued fractions over the field of formal Laurent series. Let $\mathbb{F}_q$ be a finite field with $q$ elements and  $\mathbb{F}_q((z^{-1}))$ denotes the field of all formal Laurent series
$x=\sum_{n=v}^{\infty} c_nz^{-n}$ with coefficients $c_n\in \mathbb{F}_q$. If  $x=\sum_{n=v}^{\infty} c_nz^{-n}$ with $c_v\ne 0$ to be the first non-zero coefficient in the expansion of $x$ then the valuation (or norm) of $x$ is define by
\[|0|_{\infty}:=0,\quad  |x|_{\infty}:=q^{-v}.\]
It is well known that this valuation is  non-Archimedean. The topology induced by this norm make $\mathbb{F}_q((z^{-1}))$ locally compact and the ring of polynomials $\mathbb{F}_q[z]$ discrete in the field. Thus, we can think of $\mathbb{F}_q((z^{-1}))$ analogous to the set of real numbers $\R$ and $\mathbb{F}_q[z]$ akin to the set of integers $\mathbb Z$. Note that $\mathbb{F}_q((z^{-1}))$ is a complete metric space under the metric $\rho$ defined by $\rho(x,y)=|x-y|_{\infty}$.   Let $I$ be the valuation ideal of  $\mathbb{F}_q((z^{-1}))$, that is,
\[I=\left\{x\in \mathbb{F}_q((z^{-1})):|x|_{\infty}< 1 \right\}=\left\{ \sum_{n=1}^{\infty} c_nz^{-n}: c_n\in \mathbb{F}_q \right\}.\]
Let $\nu$ be the normalised (to 1) Haar measure on $I$. For $x=\sum_{n=v}^{\infty} c_nz^{-n}\in \mathbb{F}_q((z^{-1}))$, we call
$[x]=\sum_{n=v}^{0}c_nz^{-n}$ the integer part of $x$ and $\{x\}=\sum_{n=1}^{\infty}c_nz^{-n}$  to be the fractional part of $x$.

As in the real case, consider the Gauss transformation  $T: I\to I$ defined by
\begin{equation}\label{Gauss transformation in formal series}
 T(x):=\frac{1}{x}-\left[\frac{1}{x}\right],~\quad T(0):=0.
\end{equation}

Then each $x\in I$  has a finite or infinite continued fraction expansion induced by $T$,

\begin{equation*}
x=\frac{1}{A_{1}(x)+\displaystyle{\frac{1}{A_{2}(x)+\displaystyle{\frac{1}{
A_{3}(x)+_{\ddots }}}}}}:=[A_1(x), A_2(x), \ldots],
\end{equation*}
where the partial quotients $A_i(x)$ are polynomials of strictly positive degree  defined by
\[A_i(x)=\left[\frac{1}{T^{i-1}(x)}\right],\quad i\ge1.\]
This form of continued fraction induced from the Gauss map was first introduced by Artin \cite{Artin}, see also Berth\'{e} and Nakada \cite{BertheNakada}.

As in the case of real numbers, the metrical theory of continued fractions of formal Laurent series can be used to prove many Diophantine approximation results such as the analogues of the Borel-Bernstein theorem, Khintchine theorem, Jarn\'ik theorem and so on. 
The focus, at a fundamental level,  has been on the set
\[\mathcal F_1(\Phi):=\left\{x\in I: \deg A_n(x)\ge \Phi(n) \ \mathrm{for \ infinitely \ many \ }n\in\mathbb{N}\right\}.\]
The Haar measure, denoted as $\nu$,  of $\mathcal F_1(\Phi)$ was obtained by Niederreiter in \cite{Nied88}, see also\cite[Theorem 2.4]{Fuchs2002}, proving that the Haar measure of the set $\mathcal F_1(\Phi)$ is zero (respectively full) if the series $\sum_{n\ge1}q^{-\Phi(n)}$ converges (respectively diverges).
The Hausdorff dimension of this set was completely determined for any function $\Phi$ in \cite{Hu-Wang-Wu-Yu}. We refer the reader to \cite{BugeaudZhang, Kristensen2003, GangulyGhosh, SXJ, ZhangExact} for more related metrical (or distribution of digits) results over formal Laurent series.

Taking inspirations from the study of the growth of the product of consecutive partial quotients for the real numbers we initiate studying the growth of consecutive partial quotients over the field of formal Laurent series. Let $k\geq 1$.  Define the set
\begin{equation*}  \label{def-of -Ekvarphi}
\mathcal F_k(\Phi):=\left\{x\in I: \sum_{i=1}^k\deg A_{n+i}(x)\ge \Phi(n) \ \mathrm{for \ infinitely \ many \ }n\in\mathbb{N}\right\}.
\end{equation*}

There are several natural justifications for the consideration of this set. For any $n\geq 1$, consider the $n$th convergents of $x$
\[\frac{P_n(x)}{Q_n(x)}=[A_1(x), A_2(x), \ldots, A_n(x)].\]
It is well known (see for instance \cite{Fuchs2002}) that
$$\left|x-\frac{P_n(x)}{Q_n(x)}\right|_\infty=\frac{1}{|Q_n(x)|_\infty|Q_{n+1}(x)|_\infty}$$
and $$|Q_n(x)|_\infty=\prod_{i=1}^n|A_i(x)|_\infty=q^{\sum_{i=1}^n\deg A_i(x)}.$$
Both of these facts give information on the relative  error of approximation for $x$ by consecutive convergents as
\[\log_q\frac{|x-P_{n-1}(x)/Q_{n-1}(x)|_{\infty}}{|x-P_{n}(x)/Q_{n}(x)|_{\infty}}=\deg A_{n}(x)+\deg A_{n+1}(x)\]
and
\[\log_q\prod\limits_{i=n}^{n+k}\frac{|x-P_{2i-1}(x)/Q_{2i-1}(x)|_{\infty}}{|x-P_{2i}(x)/Q_{2i}(x)|_{\infty}}=\sum_{i=2n}^{2n+2k+1}\deg A_i(x).\]

\medskip

Thus $\mathcal F_{k}(\Phi)$ describes the set of $x$ which satisfy certain relative growth speed of consecutive approximations by convergents when $k$ is even.

In this paper, we calculate the $\nu$-measure and Hausdorff dimension of the set $\mathcal F_{k}(\Phi)$.
Without loss of generality, we can assume $\Phi(n)\ge k$ since $\sum\limits_{i=1}^k\deg A_{n+i}(x)\ge k$ for any irrational $x\in \mathbb{F}_q((z^{-1}))$ and any $n\ge0$.
\begin{thm}\label{measure-thm}
Let $\Phi: \mathbb{N} \rightarrow [k,\infty)$ be a positive function. Then
\begin{equation*}
\nu(\mathcal F_{k}(\Phi))=
\left\{
             \begin{array}{lll}
             0, & {\rm if} & \sum\limits_{n=1}^{\infty} \frac{\Phi^{k-1}(n)}{q^{\Phi(n)}}< \infty,\\
             1, & {\rm if} & \sum\limits_{n=1}^{\infty} \frac{\Phi^{k-1}(n)}{q^{\Phi(n)}}= \infty.
             \end{array}
\right.
\end{equation*}
\end{thm}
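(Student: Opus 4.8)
The plan is to prove the convergence and divergence parts separately, using the cylinder structure of the continued fraction expansion over $\mathbb{F}_q((z^{-1}))$. First recall that a cylinder $C(b_1,\ldots,b_n)=\{x\in I: A_i(x)=b_i, 1\le i\le n\}$ (with $b_i\in\mathbb{F}_q[z]$, $\deg b_i\ge 1$) has $\nu$-measure exactly $q^{-2(\deg b_1+\cdots+\deg b_n)}=|Q_n|_\infty^{-2}$, and that for fixed $b_1,\ldots,b_n$ the number of polynomials $b$ with $\deg b=d$ is $(q-1)q^{d-1}$. The key combinatorial quantity is therefore the measure of the set where $\sum_{i=1}^k\deg A_{n+i}=m$ for a given $m\ge k$, \emph{conditioned} on a fixed prefix: summing $q^{-2(d_1+\cdots+d_k)}\cdot\prod (q-1)q^{d_i-1}$ over all compositions $d_1+\cdots+d_k=m$ with each $d_i\ge 1$ gives, up to multiplicative constants depending only on $q$ and $k$, a term of order $m^{k-1}q^{-m}$ (the $m^{k-1}$ counting the number of such compositions). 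Hence $\nu\big(\{x: \sum_{i=1}^k\deg A_{n+i}(x)\ge\Phi(n)\}\mid C(b_1,\ldots,b_n)\big)\asymp \Phi(n)^{k-1}q^{-\Phi(n)}$, which already identifies the series in the statement.

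For the convergence part, I would apply the first Borel--Cantelli lemma directly: $\nu\big(\{x:\sum_{i=1}^k\deg A_{n+i}(x)\ge\Phi(n)\}\big)=\sum_{b_1,\ldots,b_n}\nu(C(b_1,\ldots,b_n))\cdot\nu(\cdots\mid C(b_1,\ldots,b_n))\ll \Phi(n)^{k-1}q^{-\Phi(n)}$, so if $\sum_n \Phi(n)^{k-1}q^{-\Phi(n)}<\infty$ the set $\mathcal F_k(\Phi)$ has measure zero. This direction is routine once the conditional-measure estimate is in place.

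For the divergence part, the obstacle is the usual one: the events $E_n=\{x:\sum_{i=1}^k\deg A_{n+i}(x)\ge\Phi(n)\}$ are not independent, so one needs a quantitative second Borel--Cantelli / Chung--Erd\H{o}s type argument, i.e.\ control of $\nu(E_m\cap E_n)$ for $m<n$. The non-Archimedean structure helps: once $m+k\le n$, the constraints on $A_{m+1},\ldots,A_{m+k}$ and on $A_{n+1},\ldots,A_{n+k}$ involve disjoint blocks of partial quotients, and by decomposing over the cylinder $C(b_1,\ldots,b_{m+k})$ and then over $C(b_1,\ldots,b_n)$ one gets quasi-independence $\nu(E_m\cap E_n)\ll \nu(E_m)\nu(E_n)$ with an absolute implied constant (the bounded distortion of cylinders in the Laurent series setting is cleaner than over $\mathbb R$ because measures of cylinders are given by exact powers of $q$). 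For the finitely many pairs with $m<n<m+k$ one absorbs them into the diagonal term. Then the standard divergence Borel--Cantelli lemma (e.g.\ the Chung--Erd\H{o}s inequality, or Sprind\v{z}uk's / Philipp's version) yields $\nu(\limsup E_n)>0$, and a zero--one law for the tail event $\mathcal F_k(\Phi)$ — either an ergodicity argument for $T$ or the observation that $\mathcal F_k(\Phi)$ is invariant under changing finitely many partial quotients together with a density/Kolmogorov-type argument — upgrades this to $\nu(\mathcal F_k(\Phi))=1$. I expect the quasi-independence estimate $\nu(E_m\cap E_n)\ll\nu(E_m)\nu(E_n)$, and in particular keeping the implied constant uniform in $m,n$, to be the main technical point; the rest is bookkeeping with cylinder measures.
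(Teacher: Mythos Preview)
Your convergence argument matches the paper's. The divergence argument is correct in outline but substantially overcomplicated, because you are reasoning as if this were the real-number setting. Over $\mathbb{F}_q((z^{-1}))$ the partial quotients $A_1(x),A_2(x),\ldots$ are genuinely i.i.d.\ under $\nu$ (the cylinder measures factor exactly, not just up to constants), so once two blocks $\{A_{m+1},\ldots,A_{m+k}\}$ and $\{A_{n+1},\ldots,A_{n+k}\}$ are disjoint the events $E_m$ and $E_n$ are \emph{exactly} independent, not merely quasi-independent. There is therefore no need for Chung--Erd\H{o}s, a bounded-distortion estimate, or a separate zero--one law.

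The paper exploits this by a one-line pigeonhole: since $\sum_n\nu(E_n)=\sum_{j=0}^{k-1}\sum_i\nu(E_{ik+j})=\infty$, some residue class $j_0$ has $\sum_i\nu(E_{ik+j_0})=\infty$. The events $E_{j_0},E_{k+j_0},E_{2k+j_0},\ldots$ involve pairwise disjoint blocks of partial quotients, hence are mutually independent, and the standard second Borel--Cantelli lemma gives $\nu(\limsup_i E_{ik+j_0})=1$ directly, which already implies $\nu(\mathcal F_k(\Phi))=1$. Your route would reach the same conclusion, but the intermediate steps (quasi-independence constants, positive-measure-to-full upgrade) are unnecessary here; the Laurent-series structure buys you exact independence for free.

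A minor slip: the number of polynomials of degree $d$ is $(q-1)q^{d}$, not $(q-1)q^{d-1}$; this does not affect the asymptotic $\Phi(n)^{k-1}q^{-\Phi(n)}$.
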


The Hausdorff dimension of $\mathcal F_{k}(\Phi)$ is completely given by the following result.


\begin{thm}\label{main-dimension-thm}
Let $\Phi: \mathbb{N} \rightarrow [k,\infty)$ be a positive function. Let $$ B:=\liminf\limits_{n\rightarrow \infty}\frac{\Phi(n)}{n}, \
\log b:=\liminf\limits_{n\rightarrow \infty}\frac{\log \Phi(n)}{n}.$$
Then
\begin{equation*}
\dim _{\mathcal{H}}\mathcal F_k(\Phi )=\left\{
\begin{array}{lll}
1 &  \mathrm{if} & \ B=0; \\ [3ex]
\frac{1}{1+b} &  \mathrm{if} & \ B=\infty; \\ [3ex]
s_k(B) & \mathrm{if} &  \ 0<B<\infty,
\end{array}
\right.
\end{equation*}
where $s_k(B)$ is the unique solution of the equation
\begin{equation}  \label{def of skB}
\sum\limits_{j=1}^{\infty} \frac{ (q-1)q^j}{q^{2js+Bf_{k}(s)}}=1
\end{equation}
and for any $i\geq 1$,  $f_i(s)$ is given by the following recursive formula,

\begin{equation}\label{F_k define}
f_{1}(s)=s, \ f_{i+1}(s)=\frac{sf_{i}(s)}{1-s+f_{i}(s)} \textmd{ for } \ i\geq 1.
\end{equation}

%
%
%
%

\end{thm}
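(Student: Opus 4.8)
The plan is to determine $\dim_{\mathcal H}\mathcal F_k(\Phi)$ by sandwiching it between a tractable upper bound and a matching lower bound, following the now-standard Wang--Wu strategy adapted to the non-Archimedean setting. First I would set up the cylinder geometry: for a polynomial word $(w_1,\dots,w_n)$ with $\deg w_i\ge 1$, the cylinder $[w_1,\dots,w_n]$ is a ball of radius $q^{-2\sum_{i=1}^n\deg w_i}$, and the key distortion fact is that inside a fixed level-$n$ cylinder the next partial quotient $A_{n+1}$ ranges over all polynomials of positive degree, with the sub-cylinder of those having $\deg A_{n+1}=j$ occupying a proportion $\sim (q-1)q^j\big/q^{2j}$. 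This self-similar-with-infinitely-many-branches structure is exactly what makes $f_k$ appear: controlling a \emph{block} of $k$ consecutive partial quotients whose degrees sum to roughly $\Phi(n)$ requires optimising $\deg A_{n+1}+\cdots+\deg A_{n+k}=m$ over the distribution of $m$ among the $k$ slots, and the recursion \eqref{F_k define} is precisely the Legendre-type transform that emerges when one iterates the one-step pressure relation $k$ times. I would first isolate this combinatorial lemma: for a given total degree $m$, the "cost exponent" of a length-$k$ block with the optimal internal split is $2m s + $ (lower order), and summing the geometric-type series over $m$ with weight governed by $f_k$ gives the pressure equation \eqref{def of skB}.

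For the \textbf{upper bound} I would use the natural covering of $\mathcal F_k(\Phi)$ by the sets $\bigcup_{\deg A_{n+1}+\cdots+\deg A_{n+k}\ge\Phi(n)}[w_1,\dots,w_{n+k}]$ for $n\ge N$, and estimate the $s$-dimensional Hausdorff sum. When $0<B<\infty$ the number of level-$n$ words and the radii $q^{-2\sum\deg w_i}$ combine so that the sum converges for every $s>s_k(B)$, using $\Phi(n)\gtrsim Bn$ along a subsequence; the limiting exponent is exactly the root of \eqref{def of skB}. When $B=\infty$ the constraint $\sum_{i=1}^k\deg A_{n+i}\ge\Phi(n)$ with $\Phi(n)\to\infty$ superlinearly forces $b\ge 1$ and the dominant contribution comes from a \emph{single} large partial quotient of degree $\approx\Phi(n)$, recovering the $\tfrac1{1+b}$ bound familiar from $\mathcal F_1$; here $f_k(s)\to$ a quantity that collapses the equation to the one-variable case. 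When $B=0$ one shows the dimension is at least (hence equal to) $\dim_{\mathcal H}$ of the full $\{x: \deg A_n(x)\ge c \text{ i.o.}\}$ for every constant $c$, which is known to tend to $1$.

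For the \textbf{lower bound} — the main obstacle — I would construct a Cantor-type subset of $\mathcal F_k(\Phi)$ on which a mass distribution can be built and the local dimension estimated via the mass transference / Billingsley lemma. The standard device is to choose a rapidly increasing sequence $n_\ell$ with $\Phi(n_\ell)\approx Bn_\ell$ (or $\approx b^{n_\ell}$ when $B=\infty$), prescribe at steps $n_\ell+1,\dots,n_\ell+k$ a block of partial quotients realising the optimal degree split summing to $\lceil\Phi(n_\ell)\rceil$, and at all other steps let the partial quotients range freely over degree-$1$ polynomials (or bounded degree, to be tuned). The delicate points are: (i) verifying the measure of a Cantor cylinder decays like its radius to the power $s_k(B)-\vep$, which requires the block-cost computation to interact correctly with the free steps and with the non-Archimedean ball geometry (balls are cylinders, so no annulus gymnastics are needed — a genuine simplification over $\R$); (ii) handling the $\liminf$ definition of $B$ — since $\Phi(n)/n$ only approaches $B$ along a subsequence, one must check that choosing $n_\ell$ from that subsequence does not inflate the gaps enough to lose dimension, which is where the $\liminf$ (rather than $\limsup$) in the hypothesis is used; and (iii) in the $B=\infty$ regime, arguing that concentrating the degree in one slot is not merely an upper-bound phenomenon but is actually optimal for the lower bound too, i.e. that $f_k(s)/k \to$ the right limit. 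Once the mass distribution principle is applied on each of the three regimes, the lower bounds match the upper bounds and the theorem follows; I expect steps (i) and the verification that \eqref{F_k define} is exactly the recursion produced by iterating the pressure function to be where the real work lies.
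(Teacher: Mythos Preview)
Your lower-bound plan and your treatment of the extreme cases $B=0$, $B=\infty$ are essentially what the paper does: for $0<B<\infty$ one builds a Cantor subset by prescribing, along a sparse sequence $n_j$ with $\Phi(n_j)/n_j\to B$, a block $A_{n_j+1},\dots,A_{n_j+k}$ whose degrees realise a \emph{geometric} split $\alpha_1,\dots,\alpha_k$ satisfying $s\alpha_i=(1-s)\alpha_{i-1}$ and $\sum_i\alpha_i=B$ (this is the ``optimal split'' you allude to), lets all other partial quotients have degree $\le M$, and runs the mass distribution principle.

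The upper bound for $0<B<\infty$, however, has a genuine gap. Covering by the individual cylinders $I(A_1,\dots,A_{n+k})$ with $\sum_{i=1}^k\deg A_{n+i}\ge\Phi(n)$ is \emph{not} sharp: for $\Phi(n)=Bn$ the $s$-sum over this cover is, up to polynomial factors, $\sum_n\big(P(s)\,q^{(1-2s)B}\big)^n$ where $P(s)=\sum_{j\ge1}(q-1)q^{(1-2s)j}$, and this converges only when $P(s)<q^{(2s-1)B}$. But at $s=s_k(B)$ one has $P(s)=q^{Bf_k(s)}$ by \eqref{def of skB}, and $f_k(s)>2s-1$ strictly for every $1/2<s<1$ and every $k\ge1$ (Lemma~\ref{explicit-f-k}), so this cover only certifies an upper bound strictly above $s_k(B)$. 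The inefficiency is that when a single $A_{n+i}$ carries most of the degree, the economical thing is to cover at level $n+i-1$ by the ball $G(A_1,\dots,A_{n+i-1})$ of Lemma~\ref{cyl length lem-1}, not at level $n+k$.

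The paper's device for getting the sharp bound is \emph{induction on $k$} through the splitting
\[
\mathcal F_{k+1}(B)\ \subseteq\ \mathcal F_k(\gamma)\ \cup\ F_{k+1}(\gamma,B),
\]
where $F_{k+1}(\gamma,B)$ is the set of $x$ with $\sum_{i=1}^k\deg A_{n+i}\le n\gamma$ and $\deg A_{n+k+1}\ge nB-\sum_{i=1}^k\deg A_{n+i}$ infinitely often; the second piece is then covered efficiently by $G$-sets at level $n+k$. Optimising over $\gamma$, the choice $\gamma=Bs/(1-s+f_k(s))$ makes both pieces give dimension exactly $s_{k+1}(B)$, and the identity $\gamma f_k(s)=Bf_{k+1}(s)$ at this $\gamma$ \emph{is} the recursion \eqref{F_k define}. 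So the recursion arises from this inductive min--max rather than from a Legendre transform of a fixed block cost; your phrase ``iterating the one-step pressure relation'' gestures in the right direction, but the concrete cover you wrote does not implement it. (A minor slip: for the upper bound when $0<B<\infty$ you need $\Phi(n)\ge(B-\epsilon)n$ for \emph{all} large $n$, which is what $\liminf=B$ provides; the subsequence is used only in the lower bound.)
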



It is worth noting that the case $B=\infty$ further leads to three sub-cases.
\begin{equation*}
\dim_{\mathcal H} \mathcal F_k(\Phi)=
\left\{
             \begin{array}{lll}
             \frac{1}{2} & {\rm if} & b=1,\\ [1ex]
             \frac{1}{b+1} & {\rm if} & 1<b<\infty,\\ [1ex]
             0 & {\rm if} & b=\infty.
             \end{array}
\right.
\end{equation*}

Another natural problem that we resolve in this paper is to determine the size of the following set which is obtained by replacing ``infinitely many $n$'' in the definition of $\mathcal F_k(\Phi)$ with ``for all $n$''.
Let $$\mathcal{G}_k(\Phi)=\left\{x \in I: \sum_{i=1}^k\deg A_{n+i}(x)\geq \Phi(n) \textmd{ for all } n\geq 0\right\}.$$

We calculate the Hausdorff dimension of $\mathcal{G}_k(\Phi)$ which turns out to be independent of $k$.
\begin{thm}\label{main theorem 2 for all}
Let $\Phi(n)$ be a positive function such that $\Phi(n)\rightarrow \infty$ as $n\rightarrow \infty$. Then
$$\dim_{\mathcal H}\mathcal{G}_k(\Phi)=\frac{1}{a+1},$$
where $1\le a\le \infty$ is defined by $\log a=\limsup\limits_{n\to\infty}\frac{\log\Phi(n)}{n}$.
\end{thm}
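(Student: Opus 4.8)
The plan is to establish $\dim_{\mathcal H}\mathcal G_k(\Phi)\ge\frac1{a+1}$ and $\dim_{\mathcal H}\mathcal G_k(\Phi)\le\frac1{a+1}$ separately. Throughout I use the non-archimedean geometry of cylinders: writing $S_n=\sum_{i\le n}\deg A_i$, the cylinder $[A_1,\dots,A_n]$ is a ball of diameter comparable to $q^{-2S_n}$, there are exactly $(q-1)^nq^{S_n}$ cylinders with a prescribed degree vector $(d_1,\dots,d_n)$, and---the key geometric fact behind the lower bound---inside a level-$(n-1)$ cylinder the level-$n$ subcylinders with $\deg A_n=d$ all concentrate in a sub-ball of diameter $\asymp q^{-2S_{n-1}-d}$, each having diameter $\asymp q^{-2S_{n-1}-2d}$.

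\emph{Lower bound.} Since $\mathcal G_1(\Phi)\subseteq\mathcal G_k(\Phi)$ it suffices to bound $\dim_{\mathcal H}\mathcal G_1(\Phi)=\dim_{\mathcal H}\{x:\deg A_m(x)\ge\Phi(m-1)\ \forall m\ge1\}$ from below. The case $a=\infty$ is trivial, so assume $a<\infty$ and fix $\varepsilon>0$. As $\limsup_n\tfrac{\log\Phi(n)}{n}=\log a<\log(a+\varepsilon)$, there is $n_\varepsilon$ with $\Phi(n)<(a+\varepsilon)^n$ for $n\ge n_\varepsilon$; put $T_m=\max\{\lceil\Phi(m-1)\rceil,\lceil(a+\varepsilon)^m\rceil\}$ (so $T_m=\lceil(a+\varepsilon)^m\rceil$ for $m>n_\varepsilon$) and let $E_\varepsilon=\{x:\deg A_m(x)=T_m\ \forall m\ge1\}\subseteq\mathcal G_1(\Phi)$. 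I would prove $\dim_{\mathcal H}E_\varepsilon\ge\frac1{(a+\varepsilon)+1}$ by the mass distribution principle for the measure $\mu$ giving equal mass to each level-$N$ cylinder of $E_\varepsilon$. With $S_N=\sum_{m\le N}T_m\sim(a+\varepsilon)^{N+1}/(a+\varepsilon-1)$, a ball $B(x,r)$ meets $O(1+rq^{2S_N})$ of the level-$N$ cylinders, and---using the concentration of subcylinders above---the inequality $\mu(B(x,r))\lesssim r^s$ reduces, over all relevant scales of $r$, to $s\le\frac{S_{N-1}}{2S_{N-1}+T_N}$, whose right-hand side tends to $\frac1{(a+\varepsilon)+1}$ because $T_N/S_{N-1}\to(a+\varepsilon)-1$. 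Letting $\varepsilon\to0$ gives $\dim_{\mathcal H}\mathcal G_k(\Phi)\ge\frac1{a+1}$, which already yields the value $\tfrac12$ when $a=1$.

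\emph{Upper bound.} First, $\dim_{\mathcal H}\mathcal G_k(\Phi)\le\tfrac12$ unconditionally. Since $\Phi(n)\to\infty$, for every $M$ there is $N_M$ with $\mathcal G_k(\Phi)\subseteq\bigl\{x:\deg A_{n+1}(x)+\dots+\deg A_{n+k}(x)\ge M\ \text{for all}\ n\ge N_M\bigr\}$. Covering the latter set by the cylinders admissible for the constraints at $n=N_M,N_M+k,\dots$ and factorising, the contribution of one such block of $k$ partial quotients is $(q-1)^k\sum_{T\ge M}\binom{T-1}{k-1}q^{(1-2s)T}\asymp(q-1)^kM^{k-1}q^{(1-2s)M}$ for $s>\tfrac12$, which is $<1$ once $M\ge M_0(s)$; hence the cover-sums vanish as the level grows, and $\dim_{\mathcal H}\mathcal G_k(\Phi)\le s$ for every $s>\tfrac12$. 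Second, for $a>1$ I would invoke Theorem~\ref{main-dimension-thm}. Fix $(m_j)$ with $\tfrac{\log\Phi(m_j)}{m_j}\to\log a$; for $\delta\in(0,1)$ set $\Phi_\delta(n)=\max\{\Phi(n),\lceil a^{(1-\delta)n}\rceil\}$ (and $\Phi_R(n)=\max\{\Phi(n),\lceil R^n\rceil\}$, $R>1$, when $a=\infty$). If $x\in\mathcal G_k(\Phi)$ then $\sum_{i=1}^k\deg A_{m_j+i}(x)\ge\Phi(m_j)$, and for $j$ large $\Phi(m_j)>a^{(1-\delta)m_j}$, so the integer $\sum_{i=1}^k\deg A_{m_j+i}(x)$ is $\ge\Phi_\delta(m_j)$; therefore $\mathcal G_k(\Phi)\subseteq\mathcal F_k(\Phi_\delta)$. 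Since $\Phi_\delta(n)\ge a^{(1-\delta)n}$, the function $\Phi_\delta$ falls under the case $B=\infty$ of Theorem~\ref{main-dimension-thm} with $\liminf_n\Phi_\delta(n)^{1/n}\ge a^{1-\delta}>1$, so $\dim_{\mathcal H}\mathcal F_k(\Phi_\delta)\le\frac1{1+a^{1-\delta}}$. Letting $\delta\to0$ (resp. $R\to\infty$) gives $\dim_{\mathcal H}\mathcal G_k(\Phi)\le\frac1{a+1}$, completing the proof.

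The main obstacle is the exact dimension of the Cantor set $E_\varepsilon$. Unlike the classical real continued fraction, the self-similarity here is genuinely non-archimedean, and the fact that the subcylinders of a cylinder of diameter $\asymp q^{-2S_{n-1}}$ are packed into a sub-ball of the far smaller diameter $\asymp q^{-2S_{n-1}-T_n}$ is precisely what forces the exponent $\frac1{R+1}$ rather than $\tfrac12$ or $\frac1{2R}$; organizing the mass-distribution estimate so that it produces exactly the threshold $s=\frac{S_{N-1}}{2S_{N-1}+T_N}$ across all scales of $r$ is the delicate computation. A secondary point is calibrating $\Phi_\delta$: it must be large enough that $\mathcal G_k(\Phi)\subseteq\mathcal F_k(\Phi_\delta)$ yet have exponential growth rate within $\delta$ of $a$, so that the infinitely-often dimension of Theorem~\ref{main-dimension-thm} becomes sharp in the limit.
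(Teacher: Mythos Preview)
Your proposal is correct and follows essentially the same strategy as the paper: the lower bound via the containment $\mathcal G_1(\Phi)\subseteq\mathcal G_k(\Phi)$, the unconditional bound $\dim_{\mathcal H}\mathcal G_k(\Phi)\le\tfrac12$ by a block-covering argument, and the sharper upper bound for $a>1$ by embedding $\mathcal G_k(\Phi)$ into an $\mathcal F_k$-set and invoking Theorem~\ref{main-dimension-thm}. Two minor differences are worth noting: for the lower bound the paper simply cites the $k=1$ result from \cite{Hu-Wang-Wu-Yu} rather than rebuilding the Cantor set $E_\varepsilon$; and for the upper bound the paper uses the simpler comparison function $\Psi(n)=(a-\varepsilon)^n$ directly (since $\limsup_n \log\Phi(n)/n=\log a$ already guarantees $\Phi(n)\ge(a-\varepsilon)^n$ infinitely often, giving $\mathcal G_k(\Phi)\subseteq\mathcal F_k(\Psi)$ without the auxiliary $\Phi_\delta$).
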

When $k=1$, Theorem \ref{main theorem 2 for all} is proved as Theorem 2.3 in \cite{Hu-Wang-Wu-Yu}.

\medskip

The paper is structured in the following way.  In section \ref{prel} we group together basic definitions and some auxiliary results that we refer to in proving our results in subsequent sections. In section \ref{Haarmeasure}, we prove Theorem \ref{measure-thm}.  In section \ref{dimupper} we prove the upper bound and in section \ref{dimlower} the lower bound estimates for Theorem \ref{main-dimension-thm} for case $\Phi(n)=nB$ with $0<B<\infty$ only. In section \ref{conclusion}, we combine all the dimension estimates for all the cases to complete the proof of Theorem \ref{main-dimension-thm}. Finally, we prove Theorem \ref{main theorem 2 for all} in the last section.

\medskip

\noindent {\bf Acknowledgements.}  The first-named author is supported by Natural Science Foundation of China(11701261) and the China Scholarship Council. The second-named author was supported by the Australian Research Council Discovery Project (200100994). We thank the anonymous referee for several helpful comments which has lead to an improved presentation and clarity of proofs.

\section{Preliminaries and auxiliary results}\label{prel}
We first introduce some fundamental properties of continued fractions in the field of formal Laurent series.  
Let \[\frac{P_n(x)}{Q_n(x)}=[A_1(x), A_2(x), \ldots, A_n(x)]\]  be  the $n$th convergents of $x$. The convergents can be obtained  from the following recursive formulae:
\begin{align*}
 P_{-1}(x)&=1,  \quad P_0(x)=0, \quad  P_n(x)=A_n(x)P_{n-1}(x)+P_{n-2}(x), \quad  (n\geq 2), \\
 Q_{-1}(x)&=0, \quad Q_0(x)=1,  \quad Q_n(x)=A_n(x)Q_{n-1}(x)+Q_{n-2}(x), \quad (n\geq 2).
  \end{align*}
We list some useful properties of these convergents, see \cite{Fuchs2002,Nied88} for their proofs.

\begin{pro}[\hspace*{-4px}\cite{Fuchs2002, Nied88}]\label{property of convergents}
Let $x\in {\mathbb{F}((z^{-1}))}$. Then for all $n\ge1$,
\begin{enumerate}
  \item[(i)] $(P_n(x),Q_n(x))=1$.
\smallskip
\item[(ii)]  $Q_n(x)P_{n-1}(x)-P_n(x)Q_{n-1}(x)=(-1)^n$.
\smallskip
\item[(iii)] $|Q_n(x)|_{\infty}=\prod_{i=1}^n |A_i(x)|_{\infty}.$
\smallskip
\item[(iv)] $\left|x-\frac{P_n(x)}{Q_n(x)}\right|_{\infty}=\frac{1}{|Q_n(x)Q_{n+1}(x)|_{\infty}}=\frac{1}{|A_{n+1}(x)Q^2_n(x)|_{\infty}}.$
\end{enumerate}
\end{pro}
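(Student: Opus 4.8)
The plan is to prove (ii) first by induction, obtain (i) as an immediate consequence, then deduce (iii) from a degree-growth argument that exploits the non-Archimedean norm, and finally derive (iv) from a closed formula expressing $x$ through its convergents and its tail. The only special feature of the Laurent series setting we use is that every partial quotient $A_i(x)$ is a polynomial of degree $\ge1$, so that $|A_i(x)|_\infty\ge q>1$, together with the strong triangle inequality $|u+v|_\infty=\max\{|u|_\infty,|v|_\infty\}$ valid whenever $|u|_\infty\ne|v|_\infty$. For (ii), the base case $n=1$ reads $Q_1P_0-P_1Q_0=A_1\cdot0-1\cdot1=-1$. For the inductive step, substitute $P_n=A_nP_{n-1}+P_{n-2}$ and $Q_n=A_nQ_{n-1}+Q_{n-2}$ into $Q_nP_{n-1}-P_nQ_{n-1}$; the terms containing $A_n$ cancel, leaving $Q_{n-2}P_{n-1}-P_{n-2}Q_{n-1}=-(Q_{n-1}P_{n-2}-P_{n-1}Q_{n-2})=-(-1)^{n-1}=(-1)^n$. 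Claim (i) is then immediate: any common divisor in $\mathbb{F}_q[z]$ of $P_n$ and $Q_n$ divides the left-hand side of (ii), hence divides the nonzero constant $(-1)^n$, so it is a unit.

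For (iii), I would prove by induction that $|Q_n|_\infty=\prod_{i=1}^n|A_i|_\infty$, with $|Q_0|_\infty=|1|_\infty=1$ and $|Q_1|_\infty=|A_1|_\infty$. Assuming the formula for all indices $<n$, we have $|Q_{n-1}|_\infty=|A_{n-1}|_\infty|Q_{n-2}|_\infty\ge q|Q_{n-2}|_\infty>|Q_{n-2}|_\infty$, and hence also $|A_nQ_{n-1}|_\infty=|A_n|_\infty|Q_{n-1}|_\infty\ge q|Q_{n-1}|_\infty>|Q_{n-2}|_\infty$; applying the strong triangle inequality to $Q_n=A_nQ_{n-1}+Q_{n-2}$ gives $|Q_n|_\infty=|A_nQ_{n-1}|_\infty=|A_n|_\infty|Q_{n-1}|_\infty=\prod_{i=1}^n|A_i|_\infty$. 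In particular $|Q_n|_\infty$ is strictly increasing in $n$, a fact used below.

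For (iv), I would first establish by induction on $n$ the identity
\[x=\frac{\xi_{n+1}P_n+P_{n-1}}{\xi_{n+1}Q_n+Q_{n-1}},\qquad \xi_{n+1}:=\frac1{T^nx}\ \bigl(=A_{n+1}(x)+T^{n+1}(x)\bigr),\]
the base case $n=0$ reducing to $x=1/\xi_1$ and the inductive step following from $\xi_{n+1}=A_{n+1}+1/\xi_{n+2}$ together with the convergent recursions. Subtracting $P_n/Q_n$ and invoking (ii) gives
\[x-\frac{P_n}{Q_n}=\frac{Q_nP_{n-1}-P_nQ_{n-1}}{Q_n(\xi_{n+1}Q_n+Q_{n-1})}=\frac{(-1)^n}{Q_n(\xi_{n+1}Q_n+Q_{n-1})}.\]
Since $|T^{n+1}x|_\infty<1\le q\le|A_{n+1}|_\infty$, the strong triangle inequality yields $|\xi_{n+1}|_\infty=|A_{n+1}|_\infty$, so $|\xi_{n+1}Q_n|_\infty=|A_{n+1}|_\infty|Q_n|_\infty=|Q_{n+1}|_\infty>|Q_{n-1}|_\infty$, and therefore $|\xi_{n+1}Q_n+Q_{n-1}|_\infty=|Q_{n+1}|_\infty$. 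Taking $|\cdot|_\infty$ in the displayed identity gives $|x-P_n/Q_n|_\infty=|Q_nQ_{n+1}|_\infty^{-1}$, and substituting $|Q_{n+1}|_\infty=|A_{n+1}|_\infty|Q_n|_\infty$ from (iii) produces the second equality.

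The only point requiring genuine care — and it is precisely what makes the argument function in this non-Archimedean field — is the repeated appeal to the strong triangle inequality: at each use one must certify that the two summands have strictly different $|\cdot|_\infty$-values (for instance $|A_nQ_{n-1}|_\infty>|Q_{n-2}|_\infty$ in (iii), and $|\xi_{n+1}Q_n|_\infty>|Q_{n-1}|_\infty$ in (iv)). This is exactly where the hypothesis $\deg A_i\ge1$ and the strict growth of $|Q_n|_\infty$ enter; if the two norms ever coincided one would obtain only inequalities instead of the exact identities claimed.
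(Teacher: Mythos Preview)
Your proof is correct and complete. Note, however, that the paper does not supply its own proof of this proposition: it is stated with attribution to \cite{Fuchs2002,Nied88} and the text simply says ``For the proof of these properties, see \cite{Fuchs2002,Nied88}.'' So there is no paper-side argument to compare against beyond the standard one in those references, which is exactly the route you take --- induction for (ii), (i) as a corollary, the strong triangle inequality plus $\deg A_i\ge1$ for (iii), and the tail identity $x=(\xi_{n+1}P_n+P_{n-1})/(\xi_{n+1}Q_n+Q_{n-1})$ for (iv). Your emphasis on verifying strict inequality of norms before each application of the strong triangle inequality is precisely the right point of care in the non-Archimedean setting.
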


We will make a frequent use of the following two important properties of the continued fractions over formal power series. The first is that the Haar measure $\nu$ is preserved by the Gauss map $T$ \cite[Lemma 3]{Nied88}.  The second property is that the $\deg A_i(x)$ form a sequence of independent and identically distributed random variables  \cite[Lemma 4]{Nied88}.

\begin{pro}[\hspace*{-4px}\cite{Nied88}]\label{iid of nju}
  Let $T$ be defined by \eqref{Gauss transformation in formal series}. Then
  \begin{enumerate}
  \item[(i)] The Gauss map $T$ is measure preserving with respect to $\nu$.
\smallskip
\item[(ii)]  $\deg A_1(x), \deg A_2(x)\ldots$ are independent and identically distributed random variables with respect to $\nu$.
\end{enumerate}
\end{pro}
These properties are in contrast to the  real numbers case. For instance,  there exists an interval $B\subset [0, 1)$ such that $T_G^{-1}B$ and $B$ have different Lebesgue measure. However, it should not be confused with the fact that the Gauss measure and Lebesgue measure are equivalent.


\medskip

For any polynomials $A_1,A_2,\ldots,A_n\in \mathbb{F}_q[z]$ of positive degree, we call
\[I(A_1,\ldots,A_n):=\{x\in I: A_1(x)=A_1,\ldots,A_n(x)=A_n\}\]
an $n$th order {\em cylinder}. For any  subset $U\subset I$, its diameter $|U|$ can be defined as
\[|U|=\sup\left\{|x-y|_{\infty}: x,y\in U\right\}.\]
\begin{pro}[\hspace*{-4px}\cite{Nied88}]\label{measure and length of cylinders}
  The cylinder $I(A_1,\dots,A_n)$ is a closed disc with diameter
  \[|I(A_1,\dots,A_n)|=q^{-2\sum_{i=1}^n\deg A_i-1}\]
  and Haar measure
  \[\nu(I(A_1,\dots,A_n))=q^{-2\sum_{i=1}^n\deg A_i}.\]
\end{pro}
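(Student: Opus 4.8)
\textbf{Proof proposal for Proposition \ref{measure and length of cylinders}.}
The plan is to first analyze the structure of the set of $x\in I$ whose first partial quotient is a prescribed polynomial, and then iterate. Fix polynomials $A_1,\dots,A_n\in\mathbb F_q[z]$ of positive degree and set $d_j=\deg A_j$, $D_n=\sum_{j=1}^n d_j$. The key observation is that $x\in I(A_1)$ if and only if $1/x = A_1 + T(x)$ with $T(x)\in I$, i.e. $x = 1/(A_1+y)$ for a unique $y\in I$; more generally $x\in I(A_1,\dots,A_n)$ iff $x=[A_1,\dots,A_n+y]$ for a unique $y\in I$. Using the recursive formulae for $P_n,Q_n$, one has the standard identity
\[
[A_1,\dots,A_n+y]=\frac{(P_n+yP_{n-1})}{(Q_n+yQ_{n-1})},
\]
where $P_i=P_i(x),Q_i=Q_i(x)$ depend only on $A_1,\dots,A_n$. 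Thus $I(A_1,\dots,A_n)$ is the image of $I$ under the Möbius map $\varphi(y)=(P_n+yP_{n-1})/(Q_n+yQ_{n-1})$.

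Next I would compute the norm of the difference of two points in the cylinder. For $y,y'\in I$, a direct computation using Proposition \ref{property of convergents}(ii) gives
\[
\varphi(y)-\varphi(y')=\frac{(P_nQ_{n-1}-P_{n-1}Q_n)(y'-y)}{(Q_n+yQ_{n-1})(Q_n+y'Q_{n-1})}=\frac{\pm(y'-y)}{(Q_n+yQ_{n-1})(Q_n+y'Q_{n-1})}.
\]
Since $|y|_\infty<1$ and, by Proposition \ref{property of convergents}(iii), $|Q_n|_\infty=q^{D_n}>|Q_{n-1}|_\infty=q^{D_n-d_n}\ge|yQ_{n-1}|_\infty$, the non-Archimedean property forces $|Q_n+yQ_{n-1}|_\infty=|Q_n|_\infty=q^{D_n}$, and likewise for $y'$. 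Hence $|\varphi(y)-\varphi(y')|_\infty=q^{-2D_n}|y-y'|_\infty$, so $\varphi$ scales all distances by the fixed factor $q^{-2D_n}$. Since $I$ is a closed disc of diameter $q^{-1}$ (the largest norm of an element of $I$ is $q^{-1}$, attained), its image is a closed disc of diameter $q^{-2D_n-1}$, which is the claimed formula for $|I(A_1,\dots,A_n)|$.

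For the Haar measure, I would use translation invariance and the scaling behaviour of $\nu$ on discs: a disc of diameter $q^{-m}$ in $\mathbb F_q((z^{-1}))$ is a coset of the compact subgroup $\{x:|x|_\infty\le q^{-m-1}\}$ (for the appropriate $m$), and scaling by $q^{-2D_n}$ multiplies Haar measure by $q^{-4D_n}$ relative to diameter $q^{-1}$ discs; normalising so that $\nu(I)=1$ (where $I$ has diameter $q^{-1}$), one gets $\nu(I(A_1,\dots,A_n))=q^{-2D_n}$. Alternatively, and perhaps more cleanly, I would argue directly: the cylinder $I(A_1,\dots,A_n)$ consists exactly of those $x\in I$ whose Laurent expansion has a prescribed block of $2D_n$ leading coefficients (this is the content of the statement that it is a disc of diameter $q^{-2D_n-1}$), and since $\nu$ is the normalised Haar measure each such coordinate is uniformly distributed over $\mathbb F_q$, giving measure $q^{-2D_n}$; one must check the prescribed block is genuinely determined and free, which follows from the disc description. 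The main obstacle is the careful bookkeeping in the non-Archimedean estimate $|Q_n+yQ_{n-1}|_\infty=|Q_n|_\infty$ and correctly pinning down that $I$ itself is a disc of diameter exactly $q^{-1}$ (not $q^{0}$); once the scaling factor $q^{-2D_n}$ for $\varphi$ is established, both formulas follow immediately.
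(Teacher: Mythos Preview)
The paper does not prove this proposition at all; it is simply quoted from Niederreiter \cite{Nied88}, so there is nothing to compare your approach against. Your argument via the M\"obius parametrisation $\varphi(y)=(P_n+yP_{n-1})/(Q_n+yQ_{n-1})$ is the standard one and is correct for the diameter claim: the key computation $|\varphi(y)-\varphi(y')|_\infty=q^{-2D_n}|y-y'|_\infty$ is right, and since $I$ is itself a closed disc of diameter $q^{-1}$, the image is a closed disc of diameter $q^{-2D_n-1}$.

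Two small slips in the Haar-measure paragraph. First, a disc of diameter $q^{-m}$ is a coset of $\{x:|x|_\infty\le q^{-m}\}$, not $\{x:|x|_\infty\le q^{-m-1}\}$; in a non-Archimedean setting the diameter of a closed ball equals its radius. Second, the claim that ``scaling by $q^{-2D_n}$ multiplies Haar measure by $q^{-4D_n}$'' is wrong: $\mathbb F_q((z^{-1}))$ is a one-dimensional local field, so a similarity of ratio $\lambda$ scales Haar measure by $\lambda$, not $\lambda^2$. You nonetheless reach the correct answer, and your alternative argument is the clean one: the subgroup $\{x:|x|_\infty\le q^{-m}\}$ has index $q^{m-1}$ in $I$, so with $\nu(I)=1$ a disc of diameter $q^{-m}$ has measure $q^{1-m}$; taking $m=2D_n+1$ gives $q^{-2D_n}$.
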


%

\begin{lem}\label{cyl length lem-1}
Let $A_1, A_2, \ldots, A_n\in\mathbb{F}_q[z]$ be polynomials of positive degree and $m\ge 1$ be an integer. Let
$$G(A_1, A_2, \ldots, A_n)=\bigcup_{\deg A_{n+1}\geq m} I(A_1, A_2, \ldots, A_{n+1}).$$
Then
\begin{equation*}\label{cyl length equa}
 |G(A_1, A_2, \ldots, A_n)|=q^{-m-2\sum_{i=1}^{n}\deg A_i}.
\end{equation*}
\end{lem}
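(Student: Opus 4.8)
The plan is to make the Möbius-type parametrisation of a cylinder explicit and reduce the diameter of $G(A_1,\dots,A_n)$ to a single non-Archimedean norm estimate. Iterating $T^{j-1}(x)=\bigl(A_j(x)+T^j(x)\bigr)^{-1}$ together with the recursion for the convergents from Section~\ref{prel}, every $x$ in the cylinder $I(A_1,\dots,A_n)$ with an infinite expansion can be written as
$$x=\frac{P_n+tP_{n-1}}{Q_n+tQ_{n-1}},\qquad t:=T^n(x)\in I,$$
where $P_j=P_j(x)$, $Q_j=Q_j(x)$ depend only on $A_1,\dots,A_n$. Since $A_{n+1}(x)=[1/T^n(x)]$ is a polynomial whose degree equals $-\log_q|T^n(x)|_\infty$, the condition $\deg A_{n+1}(x)\ge m$ is equivalent to $|t|_\infty\le q^{-m}$. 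Thus $G(A_1,\dots,A_n)$ is precisely the image of the ball $\{t\in I:|t|_\infty\le q^{-m}\}$ (save, at most, the single point $t=0$ corresponding to a finite expansion, which is immaterial for a supremum) under the map $t\mapsto\frac{P_n+tP_{n-1}}{Q_n+tQ_{n-1}}$.

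Next I would compute distances inside this set. For two points $x,y\in G(A_1,\dots,A_n)$ with parameters $s,t$, a direct expansion gives
$$x-y=\frac{(P_n+sP_{n-1})(Q_n+tQ_{n-1})-(P_n+tP_{n-1})(Q_n+sQ_{n-1})}{(Q_n+sQ_{n-1})(Q_n+tQ_{n-1})}=\frac{(s-t)\bigl(P_{n-1}Q_n-P_nQ_{n-1}\bigr)}{(Q_n+sQ_{n-1})(Q_n+tQ_{n-1})},$$
and by Proposition~\ref{property of convergents}(ii) the numerator has norm exactly $|s-t|_\infty$. For the denominator, $\deg A_n\ge1$ together with Proposition~\ref{property of convergents}(iii) gives $|Q_{n-1}|_\infty<|Q_n|_\infty$, so $|tQ_{n-1}|_\infty\le q^{-m}|Q_{n-1}|_\infty<|Q_n|_\infty$; the ultrametric inequality then forces $|Q_n+tQ_{n-1}|_\infty=|Q_n|_\infty=q^{\sum_{i=1}^n\deg A_i}$, and likewise with $s$. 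Hence $|x-y|_\infty=|s-t|_\infty\,q^{-2\sum_{i=1}^n\deg A_i}$.

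Finally I would optimise over $s,t$. The ultrametric inequality yields $|s-t|_\infty\le\max\{|s|_\infty,|t|_\infty\}\le q^{-m}$, so $|G(A_1,\dots,A_n)|\le q^{-m-2\sum_{i=1}^n\deg A_i}$; and choosing, for instance, $s$ with $|s|_\infty=q^{-m}$ and $t$ with $|t|_\infty=q^{-m-1}$, both with infinite expansions (always possible), gives $|s-t|_\infty=q^{-m}$ and realises the bound. Combining the two estimates proves the claimed equality.

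This argument is essentially routine; the only points needing care are the evaluation $|Q_n+tQ_{n-1}|_\infty=|Q_n|_\infty$, where the hypothesis $\deg A_n\ge1$ is genuinely used, and the harmless exclusion of the finite-expansion parameter $t=0$, which does not change the supremum defining the diameter. There is no substantial obstacle. (Alternatively, one may observe that $G(A_1,\dots,A_n)$ is a closed disc and compute its Haar measure directly from Proposition~\ref{measure and length of cylinders}, summing over the $(q-1)q^{d}$ polynomials of each degree $d\ge m$, namely $\nu\bigl(G(A_1,\dots,A_n)\bigr)=(q-1)\sum_{d\ge m}q^{d}\,q^{-2(d+\sum_{i=1}^n\deg A_i)}=q^{1-m-2\sum_{i=1}^n\deg A_i}$, and then using that a disc of diameter $q^{-v}$ has Haar measure $q^{1-v}$.)
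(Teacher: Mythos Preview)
Your proof is correct and follows essentially the same route as the paper: both express points of $G(A_1,\dots,A_n)$ via the M\"obius parametrisation by convergents, use Proposition~\ref{property of convergents}(ii)--(iii) to reduce $|x-y|_\infty$ to a norm in the parameter, and then identify the extremal pair. The only cosmetic difference is that the paper parametrises one step deeper, writing $x=\frac{(A_{n+1}+x_1)P_n+P_{n-1}}{(A_{n+1}+x_1)Q_n+Q_{n-1}}$ with $x_1=T^{n+1}(x)$, which is just your parameter $t=T^n(x)$ in the form $1/t=A_{n+1}+x_1$.
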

\begin{proof} Let $x,y\in G(A_1, \ldots, A_n)$ with \[x\in I(A_1,  \ldots,A_n, A_{n+1}), ~ y\in I(A_1,\ldots,A_n, A^*_{n+1})\] for some $A_{n+1},A^*_{n+1}\in \mathbb{F}_q[z]$. Without loss of generality, we assume that $m\leq \deg A_{n+1}\le \deg A^*_{n+1}$. Let $x_1=T^{n+1}(x)$ and
$y_1=T^{n+1}(y)$, where $T$ is defined by \eqref{Gauss transformation in formal series}. Then
\begin{align*}
x&=\frac{(A_{n+1}+x_1)P_n+ P_{n-1}}{(A_{n+1}+x_1)Q_{n}+Q_{n-1}}\\
 y&=\frac{(A^*_{n+1}+y_1)P_n+ P_{n-1}}{(A^*_{n+1}+y_1)Q_{n}+Q_{n-1}}.\end{align*}
 So \[|x-y|_{\infty}=\frac{|(A_{n+1}+x_1-A^*_{n+1}-y_1)(P_{n}Q_{n-1}-P_{n-1}Q_{n})|_{\infty} }{|(A_{n+1}+x_1)Q_{n}+Q_{n-1}|_{\infty} |(A^*_{n+1}+y_1)Q_{n}+Q_{n-1}|_{\infty}}.\]
By Proposition \ref{property of convergents}, it follows that
\[|x-y|_{\infty}=\left|\frac{A_{n+1}-A^*_{n+1}}{A_{n+1}A^*_{n+1}Q^2_n}\right|_\infty\le q^{-m-2\sum_{i=1}^n\deg A_i}\]
since $m\le \deg A_{n+1}\le \deg A^*_{n+1}$. The equality holds in the above inequality when $\deg A_{n+1}=m$ and $\deg (A^*_{n+1}- A_{n+1})=\deg A^*_{n+1}$.
\end{proof}

\begin{lem}\label{measure-lem}
The number of cylinders
$I(A_{1},A_{2},\ldots,A_{k})$ such that $\deg A_{1}+\deg A_{2}+\cdots+\deg A_{k}=m$
is ${m-1\choose {k-1}}(q-1)^{k}q^m$.
\end{lem}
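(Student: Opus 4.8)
The plan is to count the cylinders directly by counting the admissible tuples $(A_1,\dots,A_k)$ of polynomials of positive degree whose degrees sum to $m$. Since distinct tuples $(A_1,\dots,A_k)$ give distinct cylinders $I(A_1,\dots,A_k)$, it suffices to enumerate such tuples. First I would split the count according to the \emph{degree vector} $(d_1,\dots,d_k)$, where $d_i=\deg A_i\ge 1$ and $d_1+\cdots+d_k=m$. The number of such integer vectors is the number of compositions of $m$ into $k$ positive parts, which is the standard stars-and-bars count $\binom{m-1}{k-1}$.

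Next, for a fixed degree vector $(d_1,\dots,d_k)$, I would count how many polynomials $A_i\in\mathbb{F}_q[z]$ have degree exactly $d_i$: such a polynomial is $c_{d_i}z^{d_i}+\cdots+c_1z+c_0$ with $c_{d_i}\ne 0$ and the remaining $d_i$ coefficients arbitrary in $\mathbb{F}_q$, giving $(q-1)q^{d_i}$ choices. Multiplying over $i=1,\dots,k$ yields
\[
\prod_{i=1}^k (q-1)q^{d_i}=(q-1)^k q^{d_1+\cdots+d_k}=(q-1)^k q^m,
\]
which is independent of the particular degree vector. Summing this constant over all $\binom{m-1}{k-1}$ degree vectors gives the total count $\binom{m-1}{k-1}(q-1)^k q^m$, as claimed.

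Since every step here is elementary combinatorics over a finite field, there is no genuine obstacle; the only point requiring a moment's care is the observation that the per-vector count $(q-1)^k q^m$ does not depend on $(d_1,\dots,d_k)$, so that the two factors decouple cleanly — the number of compositions contributes $\binom{m-1}{k-1}$ and the choice of polynomials of prescribed degrees contributes $(q-1)^k q^m$. One should also note in passing that $\binom{m-1}{k-1}=0$ when $m<k$, consistent with the fact that $k$ polynomials of positive degree cannot have degree-sum smaller than $k$.
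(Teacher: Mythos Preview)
Your proof is correct and follows essentially the same approach as the paper: first count the compositions of $m$ into $k$ positive parts via stars-and-bars to get $\binom{m-1}{k-1}$, then use that there are $(q-1)q^{d_i}$ polynomials of degree $d_i$ and multiply. The paper's version is simply more terse, omitting the explicit observation that the product $(q-1)^k q^m$ is independent of the degree vector.
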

\begin{proof}
  The number of integer vectors $(n_1,n_2,\ldots,n_k)$ such that $n_i\ge1$ for all $1\le i\le k$ and $n_1+n_2+\cdots+n_k=m$ is
  $ {m-1 \choose {k-1}}$. Since the number of polynomials in $\mathbb{F}_q[z]$ of degree $n_i$ is $(q-1)q^{n_i}$, the conclusion follows.
\end{proof}

In the next two lemmas we investigate the recursive formula \eqref{F_k define} that plays a significant role in the Hausdorff dimension estimates. Recall that 
\begin{equation*}
f_{1}(s)=s, \ f_{i+1}(s)=\frac{sf_{i}(s)}{1-s+f_{i}(s)} \textmd{ for } \ i\geq 1.
\end{equation*}

\begin{lem}\label{iterative mono}
  For any given $i\geq 1$, $f_i(s)$ is continuous, strictly monotonically increasing on $[0,1]$, and differentiable in $(0,1)$.
\end{lem}

\begin{proof} Clearly, $f_i(0)=0$ and $f_i(1)=1$. Note also that $f_i(s)$ is always a rational function for all $i\ge1$.
We shall prove the conclusion by induction on $i$. When $i=1$, the conclusion holds for $f_1(s)=s$.
Suppose that the conclusion follows for some $i\ge1$. Then
\[1-s+f_i(s)\ge 1-s+f_i(0)=1-s\ne 0\]
if $s\in[0,1)$ and
\[1-s+f_i(s)=f_i(1)=1\]
if $s=1$. Thus $1-s+f_i(s)\ne 0$ for all $s\in[0,1]$. Since
$$f_{i+1}(s)=\frac{sf_i(s)}{1-s+f_i(s)},$$
it follows that the rational function $f_{i+1}(s)$ is continuous on $[0,1]$ and differential in $(0,1)$. Moreover,
$$f'_{i+1}(s)=\frac{f_i^{2}(s)+f_i(s)+s(1-s)f'_i(s)}{(1-s+f_i(s))^2}$$
for $s\in(0,1)$. Since $f_i(s)>f_i(0)=0$ and $f_i'(s)>0$ for any $s\in(0,1)$, it follows that
$f'_{i+1}(s)>0$ for any $s\in(0,1)$. This completes the proof.
\end{proof}

\begin{lem}\label{explicit-f-k} Let $f_i(s)$ be defined by \eqref{F_k define} for $s\in[0,1]$. Then, 

\begin{equation*}
f_i(s)=\frac{s^i}{\sum_{m=0}^{i-1}s^m(1-s)^{i-1-m}}=
\left\{
             \begin{array}{lll}
           \frac{1}{2i} & \rm{if} & \ s=\frac12,\\ [1ex]
             \frac{s^i(2s-1)}{s^i-(1-s)^i} & \rm{if} & \ s\ne\frac12.
             \end{array}
\right.
\end{equation*}

\end{lem}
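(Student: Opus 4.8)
The plan is to solve the recursion \eqref{F_k define} explicitly by passing to reciprocals, which linearises it. First I would treat the degenerate case $s=\tfrac12$ separately: there $f_1=\tfrac12$, and if $f_i=\tfrac1{2i}$ then $f_{i+1}=\frac{(1/2)(1/2i)}{1/2+1/2i}=\frac{1/(4i)}{(i+1)/(2i)}=\frac{1}{2(i+1)}$, so induction closes this case immediately. For $s\neq\tfrac12$ (and $s\neq 0,1$, which are the only other values where the formula needs care, but for $s\in(0,1)$ the denominators stay positive so no division issue arises), I would set $g_i(s)=1/f_i(s)$. Dividing the recursion $f_{i+1}=\dfrac{s f_i}{1-s+f_i}$ through gives
\[
g_{i+1}=\frac{1-s+f_i}{s f_i}=\frac{1-s}{s}\,g_i+\frac1s,
\]
a first-order linear inhomogeneous recursion in $g_i$ with constant coefficient $r:=\frac{1-s}{s}$ and constant forcing term $\frac1s$, subject to $g_1=1/s$.

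Next I would solve this linear recursion by the standard method: the fixed point of $g\mapsto rg+\frac1s$ is $g^\ast=\frac{1/s}{1-r}=\frac{1/s}{1-(1-s)/s}=\frac{1}{2s-1}$ (here is exactly where $s\neq\tfrac12$ is used). Writing $g_i=g^\ast+c\,r^{\,i-1}$ and imposing $g_1=1/s$ forces $c=\frac1s-\frac1{2s-1}=\frac{(2s-1)-s}{s(2s-1)}=\frac{s-1}{s(2s-1)}$. Hence
\[
g_i=\frac{1}{2s-1}+\frac{s-1}{s(2s-1)}\Bigl(\frac{1-s}{s}\Bigr)^{i-1}
=\frac{1}{2s-1}\Bigl(1-\frac{(1-s)^i}{s^i}\Bigr)
=\frac{s^i-(1-s)^i}{s^i(2s-1)}.
\]
Taking reciprocals gives $f_i(s)=\dfrac{s^i(2s-1)}{s^i-(1-s)^i}$, as claimed. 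I would also note as a consistency check that $\lim_{s\to 1/2}\frac{s^i(2s-1)}{s^i-(1-s)^i}=\frac{1}{2i}$ by L'Hôpital (or by expanding $(1-s)^i$ around $s=\tfrac12$), so the two cases match up continuously.

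There is no real obstacle here; the only thing to be slightly careful about is bookkeeping the algebra when computing $c$ and simplifying $g_i$, and making sure the claimed formula is stated for $s\neq\tfrac12$ only (the expression is genuinely undefined there, so the case split in the statement is necessary). I expect the mildly fiddly step to be verifying that the constant $c$ combines with the fixed point to produce the clean closed form $\frac{s^i-(1-s)^i}{s^i(2s-1)}$, but this is a routine manipulation of the geometric term.
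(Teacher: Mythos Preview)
Your proposal is correct and follows essentially the same approach as the paper: both pass to reciprocals $g_i=1/f_i$ to obtain the linear recursion $g_{i+1}=\frac{1-s}{s}g_i+\frac{1}{s}$ and then solve it explicitly (the paper iterates to get a geometric sum, you use the fixed-point form; these are equivalent). The treatment of $s=\tfrac12$ is likewise the same in spirit.
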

\begin{proof} By \eqref{F_k define} and Lemma \ref{iterative mono}, we have $f_i(0)=0$, $f_i(1)=1$ and $0<f_i(s)<1$ for any $i\ge 1$ and $s\in(0,1)$.
Since $f_{i+1}(s)=\frac{sf_i(s)}{1-s+f_i(s)}$, we have
\[\frac{1}{f_{i+1}(s)}=\frac{1-s}{s}\frac{1}{f_{i}(s)}+\frac{1}{s}\]
for $s\in(0,1)$.
Thus if $s=1/2$, we have $$\frac{1}{f_{i}(1/2)}=2+2(i-1)=2i \quad \Longrightarrow  \quad  f_i(1/2)=\frac{1}{2i}.$$
If $s\ne 1/2$, we have
$$\begin{aligned}
 \frac{1}{f_{i+1}(s)}&= \left(\frac{1-s}{s}\right)^i\frac{1}{f_{1}(s)}+\frac{1}{s}\sum_{j=0}^{i-1}\left(\frac{1-s}{s}\right)^j\\
 &=\frac{s^{i+1}-(1-s)^{i+1}}{s^{i+1}(2s-1)}.
\end{aligned}
$$
So we always have
$$f_i(s)=\frac{s^i}{\sum_{m=0}^{i-1}s^m(1-s)^{i-1-m}}.$$
\end{proof}
%
It follows from Lemma \ref{explicit-f-k} that if $1/2<s<1$ then
\[f_i(s)=\frac{2s-1}{1-(\frac{1-s}{s})^i}\]
and \[0<2s-1<f_{i+1}(s)< f_i(s)\le s\] for any $i\geq1$.



\begin{lem}\label{dimension func lem}
  For any given $k\geq 1,$ let $s_k(B)$ be the unique solution in $(1/2,1)$ to the equation
 \begin{equation*}
   \sum\limits_{j=1}^{\infty} (q-1)q^j \frac{1}{q^{2js+Bf_{k}(s)}}=1.
 \end{equation*}
Then $s_k(B)$ is continuous with respect to $B$. Moreover,
$$\lim\limits_{B\rightarrow 0}s_k(B)=1,
\lim\limits_{B\rightarrow \infty}s_k(B)=\frac{1}{2}.$$
\end{lem}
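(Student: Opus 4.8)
The plan is to establish the three assertions of Lemma~\ref{dimension func lem} -- namely the uniqueness of the solution $s_k(B)$, its continuity in $B$, and the two limits -- by analysing the function
\[
g_B(s):=\sum_{j=1}^{\infty}\frac{(q-1)q^j}{q^{2js+Bf_k(s)}}
=(q-1)q^{-Bf_k(s)}\sum_{j=1}^{\infty}q^{j(1-2s)}
=\frac{(q-1)q^{1-2s-Bf_k(s)}}{1-q^{1-2s}},
\]
valid for $s>1/2$ (where the geometric series converges). First I would record this closed form and note that for $s\in(1/2,1)$ the map $s\mapsto g_B(s)$ is continuous and, I claim, strictly decreasing: the exponent $1-2s$ is strictly decreasing, $f_k(s)$ is strictly increasing by Lemma~\ref{iterative mono} so $-Bf_k(s)$ is non-increasing (strictly if $B>0$), and the denominator $1-q^{1-2s}$ is strictly increasing on $(1/2,1)$ hence its reciprocal is strictly decreasing; assembling these, $g_B$ is a product/quotient of positive strictly-decreasing-or-constant factors, so it is strictly decreasing on $(1/2,1)$. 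Then I would check the boundary behaviour: as $s\to 1/2^{+}$ the denominator $1-q^{1-2s}\to 0^{+}$ while the numerator stays bounded away from $0$, so $g_B(s)\to+\infty$; and at $s=1$ (or as $s\to1^{-}$) one computes $g_B(1)=\frac{(q-1)q^{-1-Bf_k(1)}}{1-q^{-1}}=q^{-1-Bf_k(1)}<1$ since $B\ge0$ and $f_k(1)=1>0$ (from \eqref{F_k define}, $f_1(1)=1$ and inductively $f_{i+1}(1)=f_i(1)$). By the intermediate value theorem together with strict monotonicity, there is a unique $s_k(B)\in(1/2,1)$ with $g_B(s_k(B))=1$, which is exactly \eqref{def of skB}.

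For continuity in $B$ I would view $\Psi(s,B):=g_B(s)-1$ as a jointly continuous function on $(1/2,1)\times[0,\infty)$ (the closed form above makes joint continuity transparent) with $\partial_s\Psi=g_B'(s)<0$ at the solution point; the implicit function theorem then gives that $B\mapsto s_k(B)$ is continuous (indeed $C^1$). Alternatively, and perhaps cleaner for the write-up, I would argue directly: if $B_n\to B$ and $s_k(B_n)\to s^{*}$ along a subsequence (possible since all values lie in the compact-after-closure interval $[1/2,1]$), then passing to the limit in $g_{B_n}(s_k(B_n))=1$ forces $g_B(s^{*})=1$ -- one must check $s^{*}\ne 1/2$, which follows because near $s=1/2$ the value $g_{B_n}(s)$ is uniformly large -- so $s^{*}=s_k(B)$ by uniqueness, and since every subsequential limit equals $s_k(B)$ the whole sequence converges.

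Finally, for the two limits I would again exploit monotonicity of $g_B$ in $B$ for fixed $s$. As $B\to 0$: for any fixed $s\in(1/2,1)$ we have $g_B(s)\to g_0(s)=\frac{(q-1)q^{1-2s}}{1-q^{1-2s}}$, and $g_0(s)\to 1$ precisely as $s\to 1^{-}$ (since $g_0(1)=q^{-1}/(1-q^{-1})$... actually one checks $g_0(s)=1 \iff (q-1)q^{1-2s}=1-q^{1-2s}\iff q^{1-2s}=q^{-1}\iff s=1$, so $g_0$ has its root at $s=1$); a monotonicity/squeezing argument then yields $s_k(B)\uparrow 1$ as $B\downarrow0$. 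As $B\to\infty$: for any fixed $s\in(1/2,1)$, since $f_k(s)>0$ we get $g_B(s)\to 0$, which forces $s_k(B)$ to be pushed toward the pole at $s=1/2$; quantitatively, for any $\varepsilon>0$, $g_B(1/2+\varepsilon)\to0<1$ eventually, so $s_k(B)<1/2+\varepsilon$ for large $B$, giving $s_k(B)\to 1/2^{+}$. The main obstacle -- though a mild one -- is handling the endpoint $s=1/2$ carefully throughout: the series only converges for $s>1/2$, so every limiting argument must confirm that the relevant solution or subsequential limit stays strictly above $1/2$, which is guaranteed by the blow-up $g_B(s)\to\infty$ as $s\to1/2^{+}$ but needs to be invoked explicitly rather than taken for granted.
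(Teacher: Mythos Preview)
The paper does not actually give a proof of this lemma: it simply remarks that the argument is similar to Lemma~7.1 of \cite{Hu-Wang-Wu-Yu} and skips it. Your proposal therefore supplies what the paper omits, and the route you take---summing the geometric series to obtain the closed form
\[
g_B(s)=\frac{(q-1)\,q^{\,1-2s-Bf_k(s)}}{1-q^{1-2s}}\qquad(s>\tfrac12),
\]
then using strict monotonicity in $s$ (via Lemma~\ref{iterative mono}), the intermediate value theorem, and a sequential-compactness/squeezing argument for continuity and the two limits---is the natural one and is correct in outline and in substance.

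One small arithmetic slip worth fixing: you compute $g_B(1)=q^{-1-Bf_k(1)}$, but since $1-q^{-1}=(q-1)/q$ the correct simplification is
\[
g_B(1)=\frac{(q-1)q^{-1-B}}{1-q^{-1}}=q^{-B}.
\]
This changes nothing for $B>0$ (where $q^{-B}<1$ still gives a root in $(\tfrac12,1)$), and it is in fact consistent with your later, correct, observation that $g_0(s)=1$ forces $s=1$, which is exactly what drives $s_k(B)\to 1$ as $B\to 0$. With that correction the write-up is sound.
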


The proof of this Lemma is similar to Lemma $7.1$ in  \cite{Hu-Wang-Wu-Yu}. For completeness, we include a proof here.

\begin{proof}
  (i) Let
  \[h_{B}(s)=\sum\limits_{j=1}^{\infty}(q-1)q^j\frac{1}{q^{2js+Bf_k(s)}}.\]
 Then for $s\in(1/2,1]$, $h_{B}(s)=q^{-Bf_k(s)}\frac{q-1}{q^{2s-1}-1}$ which is monotonically decreasing and continuous.
Moreover,  $h_{B}(1)=q^{-B}<1$ and $h_{B}(s)>1$ when $s\in(1/2,1/2+\delta)$ for some $\delta>0$ small enough. Thus there exists a unique
$s_k(B)\in(1/2,1)$ such that $h_{B}(s)=1$.

(ii) For any $\epsilon>0$, it suffices to prove that
\[|s_k(B)-s_k(B')|<\epsilon\]
for any $|B'-B|<\epsilon$. We first consider the case $B-\epsilon<B'<B$ and prove that
\[s_k(B)<s_k(B')<s_k(B)+\epsilon.\]
Since $s_k(\cdot)$ is monotonically decreasing, the left hand part of the inequality is trivial. Whereas, the estimate
$$
\begin{aligned}
  h_{B'}(s_k(B)+\epsilon)&=\sum\limits_{j=1}^{\infty}(q-1)q^j\frac{1}{q^{2j(s_k(B)+\epsilon)+B'f_k(s_k(B)+\epsilon)}}\\
  &\le q^{-2\epsilon}\sum \limits_{j=1}^{\infty}(q-1)q^j\frac{1}{q^{2js_k(B)+B'f_k(s_k(B)+\epsilon)}}\\
  &\le q^{-2\epsilon}\sum \limits_{j=1}^{\infty}(q-1)q^j\frac{1}{q^{2js_k(B)+B'f_k(s_k(B))}}\\
  &=q^{-2\epsilon} h_B(s_k(B))q^{(B-B')f_k(s_k(B))}\\
  &\le q^{-2\epsilon}q^{B-B'}\le q^{-\epsilon}<1,
\end{aligned}
$$
implies that $s_k(B')<s_k(B)+\epsilon$. Similarly, in the case $B<B'<B+\epsilon$, we also have
\[s_k(B)-\epsilon<s_k(B')<s_k(B).\]

(iii) Since $h_{B}(1)=q^{-B}<1$, we always have $s_k(B)<1$ for $B>0$. Take $s=\frac{2}{2+B}$, where $0<B<2$ such that $s\in(1/2,1)$.
Since $f_k(s)\le s$ for $s\in(1/2,1)$, we have
$$
\begin{aligned}
  h_B(s)&=\sum\limits_{j=1}^{\infty}(q-1)q^j\frac{1}{q^{2js+Bf_k(s)}}\\
  &\ge \sum\limits_{j=1}^{\infty}(q-1)q^j\frac{1}{q^{2js+Bs}}\\
  &\ge \sum\limits_{j=1}^{\infty}(q-1)q^j\frac{1}{q^{2j}}=1.
\end{aligned}
$$
Thus $s_k(B)\ge \frac{2}{2+B}$ when $0<B<2$ and it follows that $\lim\limits_{B\to0}s_k(B)=1$. The proof of the other assertion is similar.

\end{proof}

\subsection{Remarks on Dirichlet improvability}\label{Remark Dirichlet}

The theory of uniform Diophantine approximation concerns improvements to Dirichlet's theorem (1842).  In a recent paper,   Kleinbock and Wadleigh \cite{KlWad16} defined the set of $\phi$-Dirichlet improvable numbers  to be the set of all $x \in \mathbb{R}$ such that
\[|qx-p|< \phi(t), ~ 1\le |q|< t\]
has an integer solution $(p, q)$ for all large enough $t$. Here $\phi$ is a non-increasing function such that $\phi(t) \to 0$ as $t\to \infty$.

We investigate the analogue of Dirichlet improvability over formal Laurent series.

%
%
\begin{pro}\label{pro-1}
 For any $x \in \mathbb{F}_q((z^{-1}))$ and $t> 1$, there exists nonzero $(P,Q)\in \mathbb{F}_q[z]\times \mathbb{F}_q[z]$ such that
\begin{equation*}\label{def of Dir-2-1}
  |Q x-P|_{\infty}\le\frac{1}{t}, ~ ~ |Q|_{\infty}<t.
\end{equation*}
\end{pro}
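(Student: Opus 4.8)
The plan is to mimic the classical pigeonhole proof of Dirichlet's theorem, but carried out over the local field $\mathbb{F}_q((z^{-1}))$ using the non-Archimedean valuation. Fix $x\in\mathbb{F}_q((z^{-1}))$ and $t>1$. Write $\ell=\lceil\log_q t\rceil$, so that $q^{\ell-1}<t\le q^{\ell}$; in particular the number of polynomials $Q\in\mathbb{F}_q[z]$ with $\deg Q<\ell$ (including $Q=0$) is exactly $q^{\ell}$. First I would consider the map sending such a $Q$ to the ``fractional part'' $\{Qx\}$, i.e.\ the truncation of $Qx$ to terms $z^{-1},z^{-2},\dots,z^{-\ell}$ (equivalently, $Qx - [Qx]$ reduced modulo $z^{-\ell}$). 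The set of such truncations lies in a space of size $q^{\ell}$ — there are $q^{\ell}$ possible words $c_1z^{-1}+\cdots+c_{\ell}z^{-\ell}$. Here I need to be slightly careful: the naive pigeonhole on $q^{\ell}$ inputs and $q^{\ell}$ outputs does not immediately force a collision. The standard fix in the function-field setting is to use that one of the outputs is automatically $0$ (namely $Q=0$ gives $\{Qx\}=0$) together with a counting argument, or to take $\deg Q\le\ell$ so there are $q^{\ell+1}$ choices of $Q$ against $q^{\ell}$ possible truncations.

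Concretely, I would let $Q$ range over all polynomials with $\deg Q\le \ell-1$ together with a suitable normalization, giving strictly more than $q^{\ell}$ choices of the pair, and apply the pigeonhole principle to the truncation map into the $q^{\ell}$-element set of length-$\ell$ tails. This yields two distinct polynomials $Q_1,Q_2$ of degree $<\ell$ with $\{Q_1x\}$ and $\{Q_2x\}$ agreeing in the first $\ell$ coefficients, so $|Q_1x-P_1-(Q_2x-P_2)|_{\infty}\le q^{-\ell}$ for appropriate $P_1,P_2\in\mathbb{F}_q[z]$. Setting $Q=Q_1-Q_2$ and $P=P_1-P_2$, we get $(P,Q)\ne(0,0)$ with $|Qx-P|_{\infty}\le q^{-\ell}\le \dfrac{1}{t}\cdot\dfrac{q^{\ell}}{t}\cdot t^{-1}$... rather, more cleanly, $|Qx-P|_{\infty}\le q^{-\ell}<q^{-(\ell-1)}\cdot q^{-1}$, and since $q^{\ell-1}<t$ we obtain $|Qx-P|_{\infty}\le q^{-\ell}\le 1/t$ once $\ell$ is chosen as $\ell=\lceil\log_q t\rceil$ (so $q^{\ell}\ge t$). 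Simultaneously $\deg Q<\ell$, hence $|Q|_{\infty}\le q^{\ell-1}<t$, which is the second inequality.

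The genuinely delicate point — and the one I would spend care on — is the bookkeeping that makes the pigeonhole yield a \emph{nonzero} pair while respecting the strict inequality $|Q|_{\infty}<t$; this is exactly where the function-field argument differs from the real-number one, because here $q^{\ell}$ is an exact count rather than a continuum, so the ``boxes'' and ``pigeons'' must be balanced with an honest off-by-one. One clean route: take the $q^{\ell}$ polynomials $Q$ with $\deg Q\le\ell-1$; the truncation map $Q\mapsto (\text{first }\ell\text{ coefficients of }\{Qx\})$ sends this $q^{\ell}$-element set to itself, and $Q=0\mapsto 0$; if the map is injective it is a bijection, but then some nonzero $Q$ maps to a tail whose first $\ell-1$ coefficients vanish, forcing $|Qx-P|_{\infty}\le q^{-\ell}$ directly; if it is not injective, two distinct $Q_1,Q_2$ collide and we proceed as above. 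Either way we land on a nonzero $(P,Q)$ with $|Qx-P|_{\infty}\le q^{-\ell}\le 1/t$ and $|Q|_{\infty}\le q^{\ell-1}<t$, completing the proof. I expect no analytic obstacle beyond this discrete pigeonhole calibration; the non-Archimedean inequality $|a+b|_{\infty}\le\max\{|a|_{\infty},|b|_{\infty}\}$ does all the work that the triangle inequality does classically, and in fact makes the constants cleaner.
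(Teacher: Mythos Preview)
Your pigeonhole argument is correct, though the exposition could be tightened (the line ``$|Qx-P|_{\infty}\le q^{-\ell}\le \frac{1}{t}\cdot\frac{q^{\ell}}{t}\cdot t^{-1}$'' is a false start you should simply delete). The clean version you eventually reach works: with $\ell=\lceil\log_q t\rceil$ one has $q^{\ell-1}<t\le q^{\ell}$, there are $q^{\ell}$ polynomials $Q$ with $\deg Q<\ell$, and your injective/non-injective dichotomy on the truncation map produces a nonzero $Q$ with $|Qx-P|_{\infty}\le q^{-\ell}\le 1/t$ and $|Q|_{\infty}\le q^{\ell-1}<t$. (A slightly shorter variant: map the $q^{\ell}$ polynomials to only the first $\ell-1$ coefficients of $\{Qx\}$, giving $q^{\ell-1}$ boxes and a guaranteed collision without the case split.)

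The paper takes a different and much shorter route: it uses the continued fraction convergents directly. For irrational $x$ one picks $n$ with $|Q_n(x)|_{\infty}<t\le|Q_{n+1}(x)|_{\infty}$ and invokes the identity $|Q_n(x)x-P_n(x)|_{\infty}=1/|Q_{n+1}(x)|_{\infty}\le 1/t$; the rational case is handled separately by either the same device or by taking $(P,Q)=(A,B)$ when $x=A/B$ with $|B|_{\infty}<t$. This approach is natural in context since the paper has already set up the convergent machinery in Proposition~\ref{property of convergents}, and it makes transparent that the best approximations are exactly the convergents. Your approach, by contrast, is self-contained (it does not presuppose any continued fraction theory) and is the one that generalises readily to simultaneous approximation in $\mathbb{F}_q((z^{-1}))^n$ or to the adelic geometry-of-numbers setting; that is its main advantage.
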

\begin{proof} Let  $x$ is irrational. Since $t>1$ and $Q_0(x)=1$, there exists $n\ge 0$ such that $|Q_n(x)|_{\infty}< t\le  |Q_{n+1}(x)|_{\infty}$. Then
we have
\[|Q_n(x) x-P_n(x)|_{\infty}=\frac{1}{|Q_{n+1}(x)|_{\infty}}\le\frac{1}{t}.\]
If $x$ is rational, write $x=A/B$ with co-prime polynomials $A$ and $B$. If $t\le |B|_{\infty}$, we can get the conclusion by the same arguments as in the irrational case. If $t> |B|_{\infty}$, we get the conclusion by taking $Q=B$ and $P=A$.
\end{proof}
This proposition is an analogue of Dirichlet's Theorem in the field of formal Laurent series. However, note that it is slightly different from the form of Dirichlet's Theorem over formal Laurent series in \cite[Theorem 1.1]{GangulyGhosh}.

Now we define the
$\phi$-Dirichlet improvable set $\mathcal Dir(\phi)$ in the field formal Laurent series field as follows.
Let $\mathcal Dir(\phi)$  be  the set of all $x \in \mathbb{F}_q((z^{-1}))$ such that
\begin{equation}\label{def of Dir-2}
  |Q x-P|_{\infty}\le \phi(t), ~ ~ |Q|_{\infty}<t
\end{equation}
has a nonzero solution $(P,Q)\in \mathbb{F}_q[z]\times \mathbb{F}_q[z]$ for all large enough $t$.

For $x\in \mathbb{F}_q((z^{-1}))$, define
$$\|x\|=\min\limits_{P\in\mathbb{F}_q[z]} |x-P|_{\infty}.$$

\begin{lem}\label{first equ-2}
   Let $\phi$ be non-increasing. Then an irrational $x\in \mathcal Dir(\phi)$ if and only if
  \begin{equation}\label{equvalent def-2}
   \|Q_{n-1}(x) x\|\le \phi(|Q_n(x)|_{\infty})
  \end{equation}
   for all sufficiently large $n$, where $P_n(x)/Q_n(x)$ is the $n$th convergent of $x$.
\end{lem}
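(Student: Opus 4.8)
The plan is to characterize membership in $\mathcal{Dir}(\phi)$ for irrational $x$ by reducing the polynomial approximation condition \eqref{def of Dir-2} to a condition tested only along the denominators $Q_n(x)$ of convergents, using the best approximation property of convergents in the non-Archimedean setting. First I would recall the standard fact that for irrational $x$, the convergents $P_n(x)/Q_n(x)$ are the best rational approximations in the strong sense: if $1\le |Q|_\infty<|Q_{n+1}(x)|_\infty$, then $\|Qx\|\ge\|Q_{n-1}(x)x\|=|Q_{n-1}(x)x-P_{n-1}(x)|_\infty$, with the minimum over all such $Q$ attained at $Q=Q_{n-1}(x)$. This follows from Proposition \ref{property of convergents}(iv) together with the recursion for the $Q_n$, exactly as in the real case but cleaner because the ultrametric inequality makes the relevant estimates sharp rather than merely up to constants.

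The forward direction: suppose $x\in\mathcal{Dir}(\phi)$, so \eqref{def of Dir-2} has a nonzero solution for all large $t$. Given a large index $n$, apply the definition with $t=|Q_n(x)|_\infty$ (which tends to infinity since $x$ is irrational, so this $t$ is eventually large). We obtain $(P,Q)$ with $|Q|_\infty<|Q_n(x)|_\infty$ and $|Qx-P|_\infty\le\phi(|Q_n(x)|_\infty)$. Since $1\le|Q|_\infty<|Q_n(x)|_\infty=|Q_n(x)|_\infty$, the best-approximation property gives $\|Q_{n-1}(x)x\|\le\|Qx\|\le|Qx-P|_\infty\le\phi(|Q_n(x)|_\infty)$, which is \eqref{equvalent def-2}. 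The converse: assume \eqref{equvalent def-2} holds for all $n\ge N$. Given a large real parameter $t$, choose $n$ with $|Q_n(x)|_\infty<t\le|Q_{n+1}(x)|_\infty$ (possible since the $|Q_m(x)|_\infty$ strictly increase to infinity and $|Q_0(x)|_\infty=1$), and take $(P,Q)=(P_n(x),Q_n(x))$. Then $|Q|_\infty=|Q_n(x)|_\infty<t$, and by Proposition \ref{property of convergents}(iv), $|Q_n(x)x-P_n(x)|_\infty=|Q_{n+1}(x)|_\infty^{-1}\le t^{-1}$; but I actually need the bound $\phi(t)$, so here I instead invoke \eqref{equvalent def-2} at index $n+1$: $\|Q_n(x)x\|\le\phi(|Q_{n+1}(x)|_\infty)\le\phi(t)$ because $\phi$ is non-increasing and $t\le|Q_{n+1}(x)|_\infty$. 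This produces the required solution $(P,Q)$ of \eqref{def of Dir-2} for all large $t$, so $x\in\mathcal{Dir}(\phi)$.

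The main obstacle, such as it is, is bookkeeping the index shifts correctly and confirming that the best-approximation lemma holds with the exact (not asymptotic) inequalities the ultrametric provides; once that non-Archimedean best-approximation statement is in hand, both implications are short. I would state and prove the best-approximation fact as an auxiliary observation — namely that for $x$ irrational and any polynomial $Q$ with $1\le|Q|_\infty<|Q_{n+1}(x)|_\infty$ one has $\|Qx\|\ge q^{-\deg Q_{n+1}(x)}$ with equality iff $Q$ is a scalar multiple of $Q_n(x)$ — by writing $Q=uQ_n(x)+vQ_{n-1}(x)+\cdots$ in the basis of denominators and using $|Q_m(x)Q_{m-1}(x)P\cdots|$ identities from Proposition \ref{property of convergents}(ii), or more directly by the three-distance style argument: any $Q$ with $|Q|_\infty<|Q_{n+1}(x)|_\infty$ can be written $Q=\sum_{j\le n} c_j Q_j(x)$ with $c_j\in\mathbb{F}_q[z]$, $\deg c_j$ controlled, and then $|Qx-P|_\infty$ is minimized by the tail contribution from $Q_n(x)$. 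The rest is then the routine matching of $t$ to the interval $[|Q_n(x)|_\infty,|Q_{n+1}(x)|_\infty)$ and applying monotonicity of $\phi$, exactly as sketched above.
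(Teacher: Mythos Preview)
Your approach matches the paper's exactly: test \eqref{def of Dir-2} at $t=|Q_n(x)|_\infty$ for the forward direction, and for the converse locate $t$ in the interval between consecutive $|Q_m(x)|_\infty$ and use monotonicity of $\phi$. The paper simply cites \cite[Lemma~1]{KimNakada} for the best-approximation inequality rather than reproving it.

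One slip to fix: in your first paragraph you state the best-approximation property as ``if $1\le|Q|_\infty<|Q_{n+1}(x)|_\infty$ then $\|Qx\|\ge\|Q_{n-1}(x)x\|$, with the minimum attained at $Q=Q_{n-1}(x)$'', which is false as written --- take $Q=Q_n(x)$ to see $\|Q_n(x)x\|=|Q_{n+1}(x)|_\infty^{-1}<|Q_n(x)|_\infty^{-1}=\|Q_{n-1}(x)x\|$. The correct version (which you do state later and which you actually apply in your forward direction) is: $|Q|_\infty<|Q_n(x)|_\infty$ implies $\|Qx\|\ge\|Q_{n-1}(x)x\|$. So the argument is fine once that index is corrected.
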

\begin{proof}
 Suppose $x\in \mathcal Dir(\phi)$. Take $t=|Q_n(x)|_{\infty}$ for large enough $n$.
  Then by \eqref{def of Dir-2}, there exists $0\ne Q\in \mathbb{F}_q[z]$ such that
  \[\|Q x\|\le\phi(|Q_n(x)|_{\infty}),~~ |Q|_{\infty}<|Q_n(x)|_{\infty}.\]
  By \cite[Lemma 1]{KimNakada},
$$\|Qx\| \ge\|Q_{n-1}x\| \textmd{ whenever }  |Q|_{\infty}< |Q_{n}(x)|_{\infty}.$$
So we have $\|Q_{n-1}(x) x\|\le\phi(|Q_n(x)|_{\infty})$ for all large enough $n$.

Conversely, suppose $\|Q_{n-1}(x) x\|\le\phi(|Q_n(x)|_{\infty})$ for all $n\ge N$. Then for any $t\ge |Q_N(x)|_{\infty}$,  there exists $n\ge N$ such that $|Q_{n-1}(x)|_{\infty}<t\le |Q_n(x)|_{\infty}$. Since $\phi$ is non-increasing, we have
\[\|Q_{n-1}(x) x\|\le\phi(|Q_n(x)|_{\infty})\le \phi(t).\]
Thus $x$ is $\phi$-Dirichlet.
\end{proof}

Since $$\|Q_{n-1}(x) x\|=|Q_n(x)|_{\infty}^{-1}=q^{-\deg Q_n(x)}=q^{-(\deg A_1(x)+\deg A_2(x)+\cdots+\deg A_n(x))},$$ it follows that
$x$ is $\phi$-Dirichlet improvable if and only if
\begin{equation}\label{conditon of Qn-2}
  |Q_n(x)|_{\infty} \phi(|Q_n(x)|_{\infty})=q^{\sum_{i=1}^n\deg A_i(x)}\phi(q^{\sum_{i=1}^n\deg A_i(x)})\ge1
\end{equation}
for all large enough $n$ by \eqref{equvalent def-2}.
Thus we get the following by Lemma \ref{first equ-2} and estimate \eqref{conditon of Qn-2}.

\begin{lem}\label{equav-2}
    Let $\phi$ be non-increasing. Then an irrational $x\in \mathcal Dir(\phi)$ if and only if
  \begin{equation}\label{equ-def-qn-2}
  \phi(q^{\sum_{i=1}^n\deg A_i(x)})\ge q^{-\sum_{i=1}^n\deg A_i(x)}
  \end{equation}
   for all sufficiently large $n$.
\end{lem}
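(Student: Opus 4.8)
The plan is to obtain the statement as a direct reformulation of Lemma \ref{first equ-2}, the only substantive input being an expression for $\|Q_{n-1}(x)x\|$ in terms of the partial quotients. First I would invoke Lemma \ref{first equ-2}: since $\phi$ is non-increasing, an irrational $x$ lies in $\mathcal Dir(\phi)$ if and only if $\|Q_{n-1}(x)x\|\le \phi(|Q_n(x)|_\infty)$ for all sufficiently large $n$. Hence it suffices to rewrite both sides of this inequality.

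Next I would record the identity $\|Q_{n-1}(x)x\|=|Q_n(x)|_\infty^{-1}$. By Proposition \ref{property of convergents}(iv) we have $|Q_{n-1}(x)x-P_{n-1}(x)|_\infty=|Q_n(x)|_\infty^{-1}$, and since $|Q_n(x)|_\infty\ge q>1$ for $n\ge 1$ while replacing $P_{n-1}(x)$ by any other polynomial alters $Q_{n-1}(x)x-P_{n-1}(x)$ by a nonzero polynomial — a Laurent series of norm at least $1$ — the ultrametric inequality forces the minimum in the definition of $\|\cdot\|$ to be attained at $P_{n-1}(x)$. Combined with Proposition \ref{property of convergents}(iii), i.e. $|Q_n(x)|_\infty=\prod_{i=1}^{n}|A_i(x)|_\infty=q^{\sum_{i=1}^{n}\deg A_i(x)}$, this gives
\[\|Q_{n-1}(x)x\|=q^{-\sum_{i=1}^{n}\deg A_i(x)}.\]

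Finally I would substitute these into the characterisation from Lemma \ref{first equ-2}: the condition $\|Q_{n-1}(x)x\|\le \phi(|Q_n(x)|_\infty)$ becomes
\[q^{-\sum_{i=1}^{n}\deg A_i(x)}\le \phi\bigl(q^{\sum_{i=1}^{n}\deg A_i(x)}\bigr),\]
which is precisely \eqref{equ-def-qn-2} (equivalently, this is \eqref{conditon of Qn-2} after dividing through by $|Q_n(x)|_\infty$). Requiring it for all sufficiently large $n$ on both sides yields the stated equivalence. The argument is essentially bookkeeping, so I do not anticipate a genuine obstacle; the one point deserving care is the claim that $\|Q_{n-1}(x)x\|$ is realised by the convergent numerator $P_{n-1}(x)$, which relies on the non-Archimedean nature of $|\cdot|_\infty$ together with $|Q_n(x)|_\infty>1$.
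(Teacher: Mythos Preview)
Your proposal is correct and follows essentially the same route as the paper: the paper also derives the lemma directly from Lemma~\ref{first equ-2} by noting $\|Q_{n-1}(x)x\|=|Q_n(x)|_\infty^{-1}=q^{-\sum_{i=1}^n\deg A_i(x)}$ and substituting. Your extra care in justifying this identity via the ultrametric inequality is a welcome addition but does not change the strategy.
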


Clearly, it follows from  Lemma \ref{equav-2} that  $\mathcal Dir(\phi)= \mathbb{F}_q((z^{-1})) $ if $\phi(t)=1/t$.
\begin{cor}
 Let $\phi$ be non-increasing.  If $\phi(q^n)q^n<1$ for infinitely many $n$, then $\mathcal Dir(\phi)\ne \mathbb{F}_q((z^{-1})) $.
\end{cor}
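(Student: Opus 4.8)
The plan is to derive this corollary directly from Lemma \ref{equav-2}, which characterises membership of an irrational $x$ in $\mathcal Dir(\phi)$ via the condition $\phi(q^{\sum_{i=1}^n\deg A_i(x)})\ge q^{-\sum_{i=1}^n\deg A_i(x)}$ for all sufficiently large $n$. To show $\mathcal Dir(\phi)\ne \mathbb{F}_q((z^{-1}))$, it suffices to exhibit a single irrational $x$ that violates this condition for infinitely many $n$. The idea is to engineer an $x$ whose partial sums $S_n(x):=\sum_{i=1}^n\deg A_i(x)$ hit infinitely many of the ``bad'' values $n$ (those with $\phi(q^n)q^n<1$) provided by the hypothesis.

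First I would fix an infinite set $N_1<N_2<\cdots$ of positive integers with $\phi(q^{N_j})q^{N_j}<1$ for each $j$, which exists by assumption. Then I would construct $x$ by specifying its partial quotients $A_i(x)$ one block at a time: choose $\deg A_1(x)=N_1$ (any polynomial of that degree), and then for each $j\ge 1$ choose $\deg A_{j+1}(x)=N_{j+1}-N_j\ge 1$ — possible as long as we pick the $N_j$ strictly increasing, which we may after passing to a subsequence. With this choice $S_j(x)=N_j$ for every $j\ge 1$. Since all partial quotients have strictly positive degree and infinitely many are specified, $x$ is irrational. For each $j$ we then have $\phi(q^{S_j(x)})=\phi(q^{N_j})<q^{-N_j}=q^{-S_j(x)}$, so the condition \eqref{equ-def-qn-2} fails at $n=j$ for infinitely many $j$. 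By Lemma \ref{equav-2}, $x\notin \mathcal Dir(\phi)$, hence $\mathcal Dir(\phi)\ne\mathbb{F}_q((z^{-1}))$.

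There is essentially no serious obstacle here; the only point requiring a little care is the monotonicity (non-increasing) hypothesis on $\phi$, which is needed to invoke Lemma \ref{equav-2}, and the elementary observation that the hypothesis $\phi(q^n)q^n<1$ holding for infinitely many $n$ lets us extract a strictly increasing sequence $(N_j)$ of such $n$ with $N_{j+1}-N_j\ge 1$; this is automatic since the $N_j$ are distinct integers. The rest is just the block construction of a continued fraction expansion realising prescribed degrees of partial quotients, which is standard in $\mathbb{F}_q((z^{-1}))$ since any finite or infinite sequence of positive-degree polynomials is the partial-quotient sequence of a (unique, irrational in the infinite case) element of $I$.
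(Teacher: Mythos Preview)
Your argument is correct: you exhibit an irrational $x$ whose partial sums $S_j(x)=N_j$ hit infinitely many of the bad indices, and then Lemma~\ref{equav-2} gives $x\notin\mathcal Dir(\phi)$. The only detail to note is that you implicitly need $N_1\ge 1$ so that $\deg A_1(x)=N_1$ is a legitimate positive degree; this is fine since the hypothesis supplies infinitely many positive integers $n$ with $\phi(q^n)q^n<1$.

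The paper's proof is shorter and takes a slightly different route: instead of tailoring $x$ to the particular bad set $\{N_j\}$, it picks a single universal witness, namely any $x$ with $\deg A_i(x)=1$ for all (sufficiently large) $i$. For such $x$ the partial sums satisfy $S_n(x)=n+c$ eventually (with $c=0$ if $\deg A_i(x)=1$ for all $i\ge 1$), so $\{S_n(x):n\ge 1\}$ contains every sufficiently large integer and therefore hits infinitely many bad values automatically. Your construction has the mild advantage of making the failure of \eqref{equ-def-qn-2} occur at \emph{every} $n$, not just infinitely many, and it shows more explicitly how the bad indices are used; the paper's construction is cleaner in that the same $x$ works regardless of which $n$ are bad, avoiding the extraction of a subsequence and the block-degree bookkeeping.
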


\begin{proof}
 Take $x\in\mathbb{F}_q((z^{-1}))$ with $\deg A_i(x)=1$ for all $i\ge 1$. Then \eqref{equ-def-qn-2} does not hold for infinitely many $n$. It follows that $x\not\in \mathcal Dir(\phi)$.
\end{proof}

\section{Proof of Theorem \ref{measure-thm}}\label{Haarmeasure}
As we have mentioned, the case $k=1$ was proved by Niederreiter \cite{Nied88}. Now we assume $k\ge2$.
For each $n\ge 0$, let \[E_n=\left\{x\in I: \deg A_{n+1}(x)+\deg A_{n+2}(x)+\cdots+\deg A_{n+k}(x)\geq \Phi(n)\right\}\] and
$$F_n=\left\{x\in I: \deg A_{1}(x)+\deg A_{2}(x)+\cdots+\deg A_{k}(x)\geq \Phi(n)\right\}.$$
Then \[\mathcal{F}_k(\Phi)=\left\{x\in I: x\in E_n\textmd{ for infinitely many }n\right\}.\]
Since the Haar measure is $T$-invariant (Proposition \ref{iid of nju} (i)), we have $\nu (E_n)=\nu (F_n)$. Next we calculate $\nu(F_n)$.
Note that for any $m\geq k$, we have
\begin{equation*}
\begin{aligned}
            \left\{x\in I: \sum_{i=1}^k\deg A_{i}(x)=m\right\}              =&\bigcup_{\substack{A_1,\ldots,A_k:\sum_{i=1}^k\deg A_{i}=m }}I(A_{1},A_{2},\ldots,A_{k}) . \\
\end{aligned}
\end{equation*}
By Lemma \ref{measure-lem} and Proposition \ref{measure and length of cylinders}, it follows that
\begin{equation*}
\begin{aligned}
         \nu\left( \left\{x\in I: \sum_{i=1}^k\deg A_{i}(x)=m\right\}  \right) =&{m-1\choose k-1} (q-1)^{k} q^m q^{-2m}\\ =&{m-1\choose k-1} (q-1)^{k} q^{-m} .
\end{aligned}
\end{equation*}
We denote by  $\lceil\xi\rceil$ to be the smallest integer no less than $\xi\in \mathbb{R}$. Then
\begin{equation*}
\begin{aligned}
            \nu(F_n)&= \sum\limits_{m=\lceil \Phi(n)\rceil}^{\infty}     \nu\left( \left\{x\in I: \sum_{i=1}^k\deg A_{i}(x)=m\right\}  \right)  \\
             &=\sum\limits_{m=\lceil \Phi(n)\rceil}^{\infty} {m-1\choose k-1} (q-1)^{k}q^{-m}\\
            &\ge { \lceil\Phi(n)\rceil-1\choose k-1}(q-1)^{k}q^{-\lceil\Phi(n)\rceil}\\
            &= \frac{\lceil\Phi(n)\rceil^{k-1}}{(k-1)!}(q-1)^{k}q^{-\lceil\Phi(n)\rceil}\prod\limits_{i=1}^{k-1}\left(1-\frac{i}{\lceil\Phi(n)\rceil}\right)\\
            &\ge \frac{\Phi^{k-1}(n)}{(k-1)!}(q-1)^{k}q^{-\Phi(n)-1}\left(1-\frac{k-1}{k}\right)^{k-1}\\
            &=c_1\frac{\Phi^{k-1}(n)}{q^{\Phi(n)}}
\end{aligned}
\end{equation*}
where $c_1=k^{-k+1}q^{-1}(q-1)^k/(k-1)!$. Next for the upper bound of $\nu(F_n)$, note that
\[\nu(F_n)\le \sum\limits_{m=\lceil \Phi(n)\rceil}^{\infty} m^{k-1}(q-1)^kq^{-m}.\]
Let \[b_m=m^{k-1}(q-1)^kq^{-m}\]
for $m\ge k$. Since
\[\lim\limits_{n\to\infty}\frac{b_{m+1}}{b_m}=q^{-1}\]
and
\[\frac{b_{m+1}}{b_m}\le \left(1+\frac{1}{k}\right)^{k-1}q^{-1}\]
for all $m\ge k$,
there exists a constant $c_2$ depending on $k$ and $q$ such that
\[\sum\limits_{m=i}^{\infty}b_m\le c_2b_i\]
for all $i\ge k$. Thus
$$
\begin{aligned}
            \nu(F_n)&\le c_2\lceil\Phi(n)\rceil^{k-1}(q-1)^kq^{-\lceil\Phi(n)\rceil}\\
            &\le c_2 (q-1)^k 2^{k-1}\frac{\Phi^{k-1}(n)}{q^{\Phi(n)}}.
\end{aligned}$$
So there exists a constant $c>0$ depending on $k$ and $q$ such that
\[c^{-1} \frac{\Phi^{k-1}(n)}{q^{\Phi(n)}}\le \nu(F_n)\le c\frac{\Phi^{k-1}(n)}{q^{\Phi(n)}}\]
for all $n\ge1$. From the first Borel-Cantelli Lemma, it follows that the $\nu$ measure of $\mathcal F_k(\Phi)$ is zero if the series $\sum_{n}\frac{\Phi^{k-1}(n)}{q^{\Phi(n)}}$ converges. For the divergence case,
since
\[\sum\limits_{n=0}^{\infty}\nu (E_n)=\sum\limits_{j=0}^{k-1}\sum\limits_{i=0}^{\infty}\nu(E_{ik+j}),\]
there exists an integer $0\le j_0\le k-1$ such that
$\sum\limits_{i=0}^{\infty}\nu(E_{ik+j_0})=\infty$.
 By Proposition \ref{iid of nju} (ii), $E_{j_0}$, $E_{k+j_0}$, $E_{2k+j_0},\ldots$ are independent with respect to $\nu$. Thus by the Borel-Cantelli Lemma,
\[\nu \left(\left\{x\in I: x\in E_{ik+j_0}\textmd{ for infinitely many }i\right\}\right)=1.\]
It follows that $\nu(\mathcal{F}_k(\Phi))=1$.

%

\section{The upper bound of $\dim_{\mathcal H}\mathcal F_{k}(\Phi)$ for $\Phi(n)=nB$}\label{dimupper}
In this section we prove the upper bound of Theorem  \ref{main-dimension-thm} for the case $\Phi(n)=nB$ with $0<B<\infty$. Recall that
\begin{equation*}
\mathcal F_k(\Phi)=\left\{x\in I: \sum_{i=1}^k\deg A_{n+i}(x)\ge \Phi(n) \ \mathrm{for \ infinitely \ many \ }n\in\mathbb{N}\right\}.
\end{equation*}
 For any given $0<B<\infty$, if $\Phi(n)=nB$, we denote $\mathcal F_{k}(\Phi)$ by $\mathcal F_k(B)$.
We shall prove the following theorem.
\begin{thm}\label{dimension of EkB}
$\dim_{\mathcal H} \mathcal F_{k}(B)=s_k(B)$, where $s_k(B)$ is defined by \eqref{def of skB}.
\end{thm}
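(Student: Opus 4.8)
The plan is to prove $\dim_{\mathcal H}\mathcal F_k(B)=s_k(B)$ by establishing matching upper and lower bounds, treating this as the representative case $\Phi(n)=nB$ with $0<B<\infty$ from which the general upper bound will follow by a standard $\liminf$ argument.

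\textbf{Upper bound.} First I would write $\mathcal F_k(B)=\bigcap_{N\ge 1}\bigcup_{n\ge N} E_n$ where $E_n=\{x\in I:\deg A_{n+1}(x)+\cdots+\deg A_{n+k}(x)\ge nB\}$, so that for every $N$ the family $\{E_n\}_{n\ge N}$ is a cover of $\mathcal F_k(B)$. Next I would decompose each $E_n$ into cylinder-type pieces: fixing $\deg A_1,\dots,\deg A_n$ and the partition of the "excess" mass $nB$ among the $k$ blocks $\deg A_{n+1},\dots,\deg A_{n+k}$, I get a union of sets of the form $G(A_1,\dots,A_{n+k-1})$ (in the notation of Lemma~\ref{cyl length lem-1}, the tail over $\deg A_{n+k}\ge m$), whose diameters are controlled by Proposition~\ref{measure and length of cylinders} and Lemma~\ref{cyl length lem-1}, and whose multiplicities are counted by Lemma~\ref{measure-lem}. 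Then for a trial exponent $s$ slightly larger than $s_k(B)$ I would bound the $s$-dimensional sum $\sum |E_n|^s$: summing over the free degrees $\deg A_1,\dots,\deg A_n$ produces, for each, a geometric factor $\sum_j (q-1)q^j q^{-2js}$, while summing over the way the mass $nB$ is split into the $k$ consecutive blocks yields precisely the combinatorial structure encoded in the recursion \eqref{F_k define}; optimizing that split (which is where $f_k$ enters — the function $f_k$ is exactly the value of the relevant optimization over $k$ nested geometric sums) gives a bound of the shape $\big(C\cdot q^{-\,\text{something}\,\cdot n}\big)$, and by the very definition \eqref{def of skB} of $s_k(B)$ this decays geometrically in $n$ once $s>s_k(B)$. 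Hence $\mathcal H^s(\mathcal F_k(B))=0$ for all $s>s_k(B)$, giving $\dim_{\mathcal H}\mathcal F_k(B)\le s_k(B)$.

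\textbf{Lower bound.} For this I would construct a Cantor-type subset of $\mathcal F_k(B)$ and apply the mass distribution principle. Fix a large integer $M$ and, along a sparse sequence of levels $n_\ell\to\infty$, force $\deg A_{n_\ell+1}+\cdots+\deg A_{n_\ell+k}\approx n_\ell B$ with the mass distributed among the $k$ blocks in the \emph{optimal} proportions dictated by the analysis above (these optimal ratios are governed by $f_1(s),\dots,f_k(s)$ at $s=s_k(B)$), while on all other indices the partial quotients are allowed to range freely over degrees $1,2,\dots,N$ for a large but fixed $N$; letting $N\to\infty$ and the gaps $n_{\ell+1}-n_\ell\to\infty$ slowly enough will push the local dimension of the natural measure supported on this set arbitrarily close to $s_k(B)$. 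The measure $\mu$ is defined on cylinders by distributing mass according to the frequencies of the allowed digit-degrees; the core estimate is a lower bound $\mu(I(A_1,\dots,A_m))\ge |I(A_1,\dots,A_m)|^{\,s_k(B)+o(1)}$, together with a comparable estimate for the intermediate sets $G(A_1,\dots,A_m)$ that arise between consecutive full cylinders — here the non-Archimedean geometry helps, since cylinders are genuine balls (Proposition~\ref{measure and length of cylinders}) and the usual difficulty of comparing balls with cylinders is milder than in the real case. Letting the parameters tend to their limits yields $\dim_{\mathcal H}\mathcal F_k(B)\ge s_k(B)$.

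\textbf{Main obstacle.} The routine parts are the Borel--Cantelli-style covering and the mass distribution principle; the delicate point is the combinatorial optimization of how the mass $nB$ (or $\Phi(n)$ in general) should be split among the $k$ consecutive blocks $\deg A_{n+1},\dots,\deg A_{n+k}$ so that the resulting $s$-sum is as large as possible — this is exactly what forces the recursion \eqref{F_k define} and the appearance of $f_k(s)$, and it is also what makes the Cantor set in the lower bound the "right" one. Getting the two sides to match requires showing that the supremum over splittings in the upper-bound computation equals the exponent realized by the optimal-ratio construction in the lower bound, i.e.\ that the function $f_k$ obtained from the recursion genuinely captures the extremal behaviour of the $k$-fold nested geometric sums; once Lemma~\ref{explicit-f-k}, Lemma~\ref{iterative mono} and Lemma~\ref{dimension func lem} are in hand this is a bookkeeping exercise, but it is the technical heart of the proof. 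I expect the bulk of sections~\ref{dimupper} and~\ref{dimlower} to be devoted precisely to making this optimization rigorous and uniform in $n$.
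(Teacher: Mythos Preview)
Your lower-bound plan is essentially the paper's: the paper builds a Cantor set $E(B,M,\epsilon,\{n_j\})\subset\mathcal F_k(B)$ in which, along a sparse sequence $\{n_j\}$, one forces $\deg A_{n_j+i}=\lfloor n_j\alpha_i\rfloor+1$ with the $\alpha_i$ determined by $s\alpha_i=(1-s)\alpha_{i-1}$ and $\sum_i\alpha_i=B$ (these are the ``optimal proportions'' you allude to), restricts all other degrees to $\{1,\dots,M\}$, and runs the mass distribution principle. One slip: for that principle you need the \emph{upper} bound $\mu(J)\le|J|^{\,s-\epsilon}$ (this is the paper's \eqref{measure on basic sets}), not the lower bound $\mu(I)\ge|I|^{\,s_k(B)+o(1)}$ you wrote.

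Your upper-bound plan, however, has a real gap. Covering $E_n$ only by sets of the form $G(A_1,\dots,A_{n+k-1})$ --- i.e.\ always absorbing the tail in the \emph{last} of the $k$ blocks --- does not give the sharp exponent $s_k(B)$. Summing $|G|^s$ over all admissible $(A_1,\dots,A_{n+k-1})$, the dominant contribution comes from configurations with $\sum_{i=1}^{k-1}\deg A_{n+i}$ close to $nB$, and the resulting critical exponent is the solution of $\sum_j(q-1)q^jq^{-2js-B(2s-1)}=1$, which is strictly larger than $s_k(B)$ because $f_k(s)>2s-1$ on $(\tfrac12,1)$ for every finite $k$ (Lemma~\ref{explicit-f-k}). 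The recursion \eqref{F_k define} does not emerge from a single-level cover; it comes from a hierarchical one. The paper produces it by \emph{induction on $k$}: one writes
\[
\mathcal F_{k+1}(B)\subseteq \mathcal F_k(\gamma)\ \cup\ F_{k+1}(\gamma,B),
\]
where $F_{k+1}(\gamma,B)$ is the set on which $\sum_{i=1}^k\deg A_{n+i}\le n\gamma$ and $\deg A_{n+k+1}\ge nB-\sum_{i=1}^k\deg A_{n+i}$ for infinitely many $n$. The first piece has dimension $\le s_k(\gamma)$ by the inductive hypothesis; the second is covered at level $n+k$, and now the last block genuinely carries at least $n(B-\gamma)$, so the one-level cover \emph{is} efficient there. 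Choosing $\gamma=\dfrac{B\tilde s}{1-\tilde s+f_k(\tilde s)}$ with $\tilde s=s_{k+1}(B)$ makes $B\tilde s-(1-\tilde s)\gamma=\gamma f_k(\tilde s)=Bf_{k+1}(\tilde s)$, so both pieces yield exactly $\tilde s$; this choice of $\gamma$ is precisely where the recursion $f_{k+1}(s)=\dfrac{sf_k(s)}{1-s+f_k(s)}$ is born. The ``optimization over splittings'' you correctly identify as the crux is realized concretely by this inductive decomposition with this particular $\gamma$, not by a direct one-shot cover.
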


The proof of the theorem splits into two parts: the upper bound and the lower bound. We prove them separately but before that we state the definition of Hausdorff dimension for completeness.   Let $U\subset I$. Then for any $\rho>0$, any finite or countable collection $\{B_i\}$ of subsets of $I$ with diameters $|B_i|\leq\rho$ such that $U\subset \cup_iB_i$ is called a $\rho$-cover of $U$. Let
$$\mathcal H^t(U)=\lim_{\rho\to 0}\left\{\inf\sum_i|B_i|^t\right\},$$
where the infimum is taken over all possible $\rho$-covers $\{B_i\}$ of $U$. The Hausdorff dimension of $U$ is defined by

$$\dim_{\mathcal H} U=\inf\left\{t\geq 0: \mathcal H^t(U)=0\right\}.$$

We first estimate the upper bound of the Hausdorff dimension of $\mathcal F_k(B)$.
\begin{lem}\label{upper bound of dimension of EkB}
  $\dim_{\mathcal H} \mathcal F_{k}(B)\le s_k(B)$.
\end{lem}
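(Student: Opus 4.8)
The plan is to cover $\mathcal F_k(B)$ by a natural family of cylinder sets and compute the resulting Hausdorff sum. Fix $0<B<\infty$, so $\Phi(n)=nB$. By definition, for each $x\in\mathcal F_k(B)$ there are infinitely many $n$ with $\deg A_{n+1}(x)+\cdots+\deg A_{n+k}(x)\ge nB$. Hence for every $N$,
\[
\mathcal F_k(B)\ \subseteq\ \bigcup_{n\ge N}\ \bigcup_{\substack{A_1,\dots,A_n\\ \deg A_{n+1}+\cdots+\deg A_{n+k}\ge nB}} I(A_1,\dots,A_{n+k}),
\]
where the inner union runs over all admissible polynomial strings. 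Letting $N\to\infty$ shows $\{I(A_1,\dots,A_{n+k})\}$ over $n\ge N$ is a $\rho$-cover with $\rho\to 0$ (since diameters shrink by Proposition \ref{measure and length of cylinders}). So for any candidate exponent $s$,
\[
\mathcal H^s(\mathcal F_k(B))\ \le\ \liminf_{N\to\infty}\ \sum_{n\ge N}\ \sum_{\ell_1,\dots,\ell_n\ge 1}\ \sum_{\substack{\ell_{n+1},\dots,\ell_{n+k}\ge 1\\ \ell_{n+1}+\cdots+\ell_{n+k}\ge nB}}\ \Big(\prod_{i=1}^{n+k}(q-1)q^{\ell_i}\Big)\, q^{-\big(2\sum_{i=1}^{n+k}\ell_i\big)s},
\]
using that there are $(q-1)q^\ell$ polynomials of degree $\ell$ and $|I(A_1,\dots,A_{n+k})|\asymp q^{-2\sum\deg A_i}$.

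Next I would factor this multi-sum. The first $n$ free coordinates contribute $\big(\sum_{\ell\ge 1}(q-1)q^{\ell-2\ell s}\big)^n$, which is a convergent geometric series (and equals $1$ precisely when $s=s_k(B)$ through the coupling below) provided $s>1/2$. The last $k$ coordinates, constrained by $\ell_{n+1}+\cdots+\ell_{n+k}\ge nB$, contribute a tail sum that I expect to behave like $q^{-(2s)nB}$ times a polynomial factor $n^{k-1}$ (from Lemma \ref{measure-lem}, the number of ways to write a given total degree $m$ as an ordered sum of $k$ positive parts is $\binom{m-1}{k-1}$). More precisely, the block sum is
\[
\sum_{m\ge \lceil nB\rceil}\binom{m-1}{k-1}(q-1)^k q^{m}\,q^{-2ms}\ \asymp\ n^{k-1}\,q^{-(2s-1)nB}
\]
for $s>1/2$. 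So the whole expression is bounded above by a constant times $\sum_{n\ge N} C^n\, n^{k-1}\, q^{-(2s-1)nB}$ where $C=\sum_{\ell\ge1}(q-1)q^{(1-2s)\ell}$. The series converges to $0$ as $N\to\infty$ as soon as $C\cdot q^{-(2s-1)B}<1$. This is exactly the subcritical regime of the equation $\sum_{j\ge1}(q-1)q^{j-2js-Bf_k(s)}=1$ — but here is the subtlety: a crude split gives the exponent $f_1$-type weight $2s\cdot B$ on the block, whereas the theorem has $f_k(s)B$ with $f_k(s)<s$ in general. The naive cover above is therefore \emph{too lossy} and will only give $\dim\le$ (something bigger than $s_k(B)$).

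The real work — and the main obstacle — is to produce a sharper cover that realizes the recursion \eqref{F_k define}. The idea is that when the constraint $\ell_{n+1}+\cdots+\ell_{n+k}\ge nB$ is active, one should \emph{not} cover each of the last $k$ cylinders individually but rather group the long digits: use a cover by sets of the form $G(A_1,\dots,A_{n+k-1})=\bigcup_{\deg A_{n+k}\ge m}I(A_1,\dots,A_{n+k})$ from Lemma \ref{cyl length lem-1}, whose diameter is $q^{-m-2\sum_{i\le n+k-1}\deg A_i}$ rather than the much smaller $q^{-2\sum_{i\le n+k}\deg A_i}$. Balancing the cost of the free coordinates against the gain from this coarser covering, over the $k$ constrained slots, produces precisely the fixed-point relation $f_{i+1}(s)=sf_i(s)/(1-s+f_i(s))$: at each of the $k$ steps one "spends" a factor governed by $f_i$ and the optimization forces the next exponent to be $f_{i+1}$. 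Concretely I would induct on the number of constrained blocks: cover $\mathcal F_k(B)$ first by fixing $A_1,\dots,A_{n+1}$ with $\deg A_{n+1}$ large, reducing to an $\mathcal F_{k-1}$-type problem at level $n+1$ with a rescaled budget, and iterate. Carrying the bookkeeping so that the exponents chain through $f_1\mapsto f_2\mapsto\cdots\mapsto f_k$, and checking that $s=s_k(B)$ is exactly the threshold where the resulting multi-geometric series passes from divergent to convergent (monotonicity of $f_k$ from Lemma \ref{iterative mono} guarantees uniqueness of this threshold), then yields $\mathcal H^{s}(\mathcal F_k(B))=0$ for every $s>s_k(B)$, i.e. $\dim_{\mathcal H}\mathcal F_k(B)\le s_k(B)$. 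The delicate points will be (a) choosing the right covering set at each of the $k$ stages so the diameters telescope correctly, and (b) controlling the polynomial-in-$n$ factors uniformly so they do not affect the convergence threshold.
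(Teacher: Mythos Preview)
Your outline is on the right track and matches the paper's strategy: induction on $k$, covering with the coarser sets $G(A_1,\dots,A_{n+k-1})$ of Lemma~\ref{cyl length lem-1} rather than full cylinders, and an optimization that forces the recursion $f_{i+1}(s)=sf_i(s)/(1-s+f_i(s))$. You are also right that the naive cylinder cover only yields the threshold coming from the weight $(2s-1)B$ rather than $f_k(s)B$, hence overshoots.

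What is missing from your sketch is the concrete dichotomy that makes the induction close. The paper does not peel off one constrained coordinate at a time by ``fixing $A_{n+1}$ large''; instead it fixes a single parameter $0<\gamma<B$ and splits
\[
\mathcal F_{k+1}(B)\subseteq \mathcal F_k(\gamma)\ \cup\ F_{k+1}(\gamma,B),
\]
where $F_{k+1}(\gamma,B)$ is the set of $x$ for which, infinitely often, $\sum_{i=1}^k\deg A_{n+i}(x)\le n\gamma$ \emph{and} $\deg A_{n+k+1}(x)\ge nB-\sum_{i=1}^k\deg A_{n+i}(x)$. The first piece has dimension $\le s_k(\gamma)$ by the induction hypothesis; the second is covered by the $G$-sets, and a direct Hausdorff-sum computation shows its dimension is at most the solution of $\sum_j(q-1)q^{j-2js-(Bs-(1-s)\gamma)}=1$. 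The optimal choice $\gamma=\dfrac{B\tilde s}{1-\tilde s+f_k(\tilde s)}$ (with $\tilde s=s_{k+1}(B)$) makes both bounds equal to $\tilde s$, precisely because $B\tilde s-(1-\tilde s)\gamma=\gamma f_k(\tilde s)=Bf_{k+1}(\tilde s)$. Your phrase ``reducing to an $\mathcal F_{k-1}$-type problem with a rescaled budget'' is morally this, but as written it does not account for the case where the first constrained coordinate is \emph{small}; the paper's two-set decomposition is what handles both regimes simultaneously. Once you implement that split and the explicit choice of $\gamma$, the remaining bookkeeping (including the harmless polynomial-in-$n$ factors you flagged) goes through exactly as you anticipated.
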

\begin{proof}
We prove this result by induction. In the case $k=1$, the result has already been proven as Theorem $7.2$ in \cite{Hu-Wang-Wu-Yu}, that is,  $\dim_{\mathcal H} \mathcal F_{1}(B)=s_1(B)$. Suppose that the conclusion holds for $k$. Then we show that
$\dim_{\mathcal H} \mathcal F_{k+1}(B)\leq s_{k+1}(B)$.
For any $0<\gamma < B,$ let

\begin{equation*}\label{pqc}
 F_{k+1}(\gamma, B)=\left\{ x\in I: \begin{array}{ll}
\sum\limits_{i=1}^k\deg A_{n+i}(x)\leq n\gamma \ \text{and} \\  \deg A_{n+k+1}(x)\geq nB-\sum\limits_{i=1}^k\deg A_{n+i}(x)
\end{array}\text{ for infinitely many  }n\in \mathbb{N}
\right\}.
\end{equation*}
Then
$$\mathcal F_{k+1}(B)\subseteq \mathcal F_{k}(\gamma)\cup F_{k+1}(\gamma, B).$$
So we have
$$\dim_{\mathcal H} \mathcal F_{k+1}(B)\leq \inf\limits_{0<\gamma <  B} \max\left\{
\dim_{\mathcal H} \mathcal F_{k}(\gamma), \dim_{\mathcal H} F_{k+1}(\gamma, B)\right\}.$$
By the induction hypothesis, we have $\dim_{\mathcal H} \mathcal F_{k}(\gamma) \leq s_k(\gamma)$, where $s_k(\gamma)$ is the unique solution of the equation
\begin{equation}  \label{def-of-s-k-gamma}
\sum\limits_{j=1}^{\infty} (q-1)q^j \frac{1}{q^{2js+\gamma f_{k}(s)}}=1.
\end{equation}
Next we shall give an upper bound of $\dim_{\mathcal H} F_{k+1}(\gamma, B)$.
For every $n\geq 1,$ let
\begin{equation*}
 F_{k+1}(\gamma, B, n)=\left\{ x\in I: \begin{array}{l}
\sum\limits_{i=1}^k\deg A_{n+i}(x)\leq n\gamma \ \text{ and} \  \\ \deg A_{n+k+1}(x)\geq nB-\sum\limits_{i=1}^k\deg A_{n+i}(x)
\end{array}
\right\}.
\end{equation*}
%
Then
$$F_{k+1}(\gamma, B)=\bigcap\limits_{N=1}^{\infty} \bigcup\limits_{n=N}^{\infty} F_{k+1}(\gamma, B, n).$$
So for any $N\geq 1,$ $\left\{F_{k+1}(\gamma, B, n):n\geq N\right\}$ is a cover of the set $F_{k+1}(\gamma, B)$.
Note that
\begin{equation*}
\begin{aligned}
           F_{k+1}(\gamma, B, n)\subseteq
            \bigcup_{\substack{\deg A_{i}\geq 1  \\
             i=1,2,\ldots,n}}
           \bigcup_{\substack{ \sum_{i=1}^{k}\deg A_{n+i}\leq n\gamma }}
             G(A_1, A_2, \ldots, A_{n+k}),
\end{aligned}
\end{equation*}
where
\begin{equation*}
           G(A_1, A_2, \ldots, A_{n+k})=
           \bigcup_{\deg A_{n+k+1}\geq nB-\sum_{i=1}^{k}\deg A_{n+i}}
             I(A_1, A_2, \ldots, A_{n+k+1}).
\end{equation*}
The diameter of $G(A_1, A_2, \ldots, A_{n+k})$, by Lemma \ref{cyl length lem-1}, is given by
$$|G(A_1, A_2, \ldots, A_{n+k})|=q^{-\lceil nB\rceil+\sum_{i=1}^{k}\deg A_{n+i}}q^{-2\sum_{i=1}^{n+k}\deg A_{i}}.$$
Thus \[\Lambda_N:=\bigcup_{n\ge N} ~~~\bigcup_{\deg A_{i}, 1\le i\le n}
         ~~  \bigcup_{\sum_{i=1}^{k}\deg A_{n+i}\leq n\gamma}
             G(A_1, A_2, \ldots, A_{n+k})\]
             is a cover of $F_{k+1}(\gamma, B)$ for any $N\ge1$.  For any $1/2<t<1$,
\begin{align*}
            \mathcal{H}^t(F_{k+1}(\gamma, B))&\le\liminf\limits_{N\to\infty}\sum\limits_{G(A_1, A_2, \ldots, A_{n+k})\in\Lambda_N}|G(A_1, A_2, \ldots, A_{n+k})|^t\\
            &\le \liminf\limits_{N\to\infty}\sum_{n\geq N}\sum_{\substack{\deg A_{i}\geq 1  \\
             i=1,2,\ldots,n}}
              \sum_{\substack{ \sum_{i=1}^{k}\deg A_{n+i}\leq n\gamma}}
             |G(A_1, A_2, \ldots, A_{n+k})|^t\\
         &\le \liminf\limits_{N\to\infty}\sum_{n\geq N} q^{-nBt}
            \left(\sum_{\substack{j\ge 1}}(q-1)q^j\frac{1}{q^{2jt}}\right)^{n}
              \sum_{\substack{ \sum_{i=1}^{k}\deg A_{n+i}\leq n\gamma}}
            q^{-t\sum_{i=1}^{k}\deg A_{n+i}} \\
          &=\liminf\limits_{N\to\infty}\sum_{n\geq N}\left(\sum\limits_{j=1}^{\infty}(q-1)q^j \frac{1}{q^{2jt+Bt}}\right)^{n}
            \sum\limits_{j=k}^{\lfloor n\gamma\rfloor}{j-1\choose k-1}(q-1)^k q^j \frac{1}{q^{jt}}.
\end{align*}
For simplicity, denote $s_{k+1}(B)$ by $\tilde{s}$. Since $\tilde{s}$ is the unique solution of the equation
\begin{equation}  \label{infin series s}
\sum\limits_{j=1}^{\infty} (q-1)q^j \frac{1}{q^{2j\tilde{s}+Bf_{k+1}(\tilde{s})}}=1,
\end{equation}
we have  $1/2<\tilde{s}=s_{k+1}(B)<1$.
%
Since
$$\begin{aligned}
               \sum\limits_{j=k}^{\lfloor n\gamma\rfloor}{j-1\choose k-1}(q-1)^k q^j \frac{1}{q^{jt}}
                &\le \sum\limits_{j=k}^{\lfloor n\gamma\rfloor}j^k(q-1)^k q^{(1-t)j} \\
                &\le (n\gamma)^k (q-1)^k\sum\limits_{j=k}^{\lfloor n\gamma\rfloor} q^{(1-t)j}\\
                &\le (n\gamma)^k (q-1)^k\frac{q^{(1-t)(n\gamma+1)}}{q^{1-t}-1}\\ &=cn^kq^{(1-t)n\gamma}
\end{aligned}$$
with $c=\gamma^k(q-1)^kq^{1-t}/(q^{1-t}-1)$,
we have
$$\begin{aligned}
 \mathcal{H}^t(F_{k+1}(\gamma, B))&\le\liminf\limits_{N\to\infty}\sum_{n\geq N}cn^k(\sum\limits_{j=1}^{\infty}(q-1)q^j \frac{1}{q^{2jt+Bt}})^{n}q^{(1-t)n\gamma}\\
&=\liminf\limits_{N\to\infty}\sum_{n\geq N}cn^k(\sum\limits_{j=1}^{\infty}(q-1)q^j \frac{1}{q^{2jt+Bt-(1-t)\gamma}})^{n}.
\end{aligned}$$
Now we take \[\gamma=\frac{B\tilde{s}}{1-\tilde{s}+f_k(\tilde{s})}.\]
Then we have
\begin{equation}\label{tilde-s-1}
 B\tilde{s}-(1-\tilde{s})\gamma=\gamma f_k(\tilde{s})=Bf_{k+1}(\tilde{s}),
\end{equation}
where the second equality follows by \eqref{F_k define}.
For any small $\epsilon >0$, take
\[t=\tilde{s}+\epsilon.\]
Let \[g(s):=\sum\limits_{j=1}^\infty(q-1)q^j\frac{1}{q^{2js+Bs-(1-s)\gamma}}.\]
The function $g(s)$ is strictly monotonically decreasing on $(1/2,\infty)$ and
\[g(\tilde{s})=\sum\limits_{j=1}^\infty(q-1)q^j\frac{1}{q^{2j\tilde{s}+B\tilde{s}-(1-\tilde{s})\gamma}}=\sum\limits_{j=1}^{\infty} (q-1)q^j \frac{1}{q^{2j\tilde{s}+Bf_{k+1}(\tilde{s})}}=1\] by \eqref{infin series s} and \eqref{tilde-s-1}. It follows that
\[ \sum_{n\geq 1}n^k (g(t))^n=\sum_{n\geq 1}n^k (g(\tilde{s}+\epsilon))^n<\infty\]
and hence
\[\mathcal{H}^t(F_{k+1}(\gamma, B))\le \lim_{N\to\infty}c\sum\limits_{n\ge N}n^k (g(\tilde{s}+\epsilon))^n=0\]
since $g(\tilde{s}+\epsilon)<g(\tilde{s})=1$. Therefore, from the definition of Hausdorff dimension, it follows that
\[\dim_{\mathcal H} F_{k+1}(\gamma, B)\le \tilde{s}.\]
On the other hand, by \eqref{infin series s} and \eqref{tilde-s-1}, we have
\[\sum\limits_{j=1}^{\infty} (q-1)q^j \frac{1}{q^{2j\tilde{s}+\gamma f_{k}(\tilde{s})}}=\sum\limits_{j=1}^{\infty} (q-1)q^j \frac{1}{q^{2j\tilde{s}+B f_{k+1}(\tilde{s})}}=1.\]
Then by \eqref{def-of-s-k-gamma}, we have $s_k(\gamma)=\tilde{s}$.
So $$\dim_{\mathcal H} \mathcal F_k(B)\le \max\{s_k(\gamma), \dim F_{k+1}(\gamma, B)\}=\tilde{s}.$$
\end{proof}

\section{The lower bound of $\dim_{\mathcal H}\mathcal F_{k}(\Phi)$ for $\Phi(n)=nB$}\label{dimlower}

In this section we shall prove that $\dim_{\mathcal H} \mathcal F_{k}(B)\geq s_{k}(B)$, where $s_{k}(B)$ is defined by \eqref{def of skB}. To prove the lower bound of the Hausdorff dimension, we shall use the well-known mass distribution principle \cite[Proposition 4.2]{Falconer_book2013}.
\begin{lem}\label{mass distribution priciple}
Let $F\subset I$ and let $\mu$ be a measure with support contained in $F$. If there are positive constants $c,\delta$ such that
$$\mu(U)<c |U|^s$$
for all discs with $|U|\le \delta$, we have
\[\dim_{\mathcal H} F\ge s.\]
\end{lem}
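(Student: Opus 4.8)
The final statement to prove is the mass distribution principle (Lemma \ref{mass distribution priciple}), so I will outline a standard proof of this classical fact.

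\smallskip

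\textbf{Overall approach.} The plan is to show directly from the definition of Hausdorff measure that $\mathcal H^s(F)>0$, which immediately forces $\dim_{\mathcal H} F\ge s$. The key observation is that the measure $\mu$ supported on $F$ gives us a lower bound on how efficiently $F$ can be covered: any cover must ``carry'' the full mass $\mu(F)>0$, and the hypothesis $\mu(U)<c|U|^s$ converts this mass bound into a bound on the sum of $s$-th powers of diameters.

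\smallskip

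\textbf{Key steps, in order.} First I would fix an arbitrary countable cover $\{U_i\}$ of $F$ by sets with $|U_i|\le\delta$; since in the non-Archimedean setting $\mathbb F_q((z^{-1}))$ every ball is closed and open and every set of diameter $r$ is contained in a ball (disc) of diameter $r$, I may assume without loss of generality that each $U_i$ is a disc (this only costs a harmless constant and keeps us within the hypothesis, which is stated for discs). Second, since $\mu$ is supported on $F$ and $F\subseteq\bigcup_i U_i$, subadditivity of $\mu$ gives
\[
0<\mu(F)\le\sum_i\mu(U_i)<c\sum_i|U_i|^s,
\]
using the hypothesis $\mu(U_i)<c|U_i|^s$ for each $i$. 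Third, this yields $\sum_i|U_i|^s>\mu(F)/c$ for \emph{every} such cover, hence taking the infimum over all $\delta$-covers and then letting $\delta\to0$ gives $\mathcal H^s(F)\ge\mu(F)/c>0$. Fourth, I invoke the elementary fact (immediate from the definition of $\dim_{\mathcal H}$ recalled just before Lemma \ref{upper bound of dimension of EkB}) that $\mathcal H^s(F)>0$ implies $\dim_{\mathcal H} F\ge s$: if the dimension were some $t<s$ then $\mathcal H^{s}(F)=0$ by monotonicity of the index set $\{t:\mathcal H^t(F)=0\}$, a contradiction.

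\smallskip

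\textbf{Main obstacle.} There is no serious obstacle here; the only point requiring a word of care is the reduction from arbitrary covering sets to discs, and the fact that $\mu$ being a measure with support in $F$ indeed makes $\mu(F)>0$ — one should note $\mu$ is assumed to be a (nonzero) finite measure, so $\mu(F)\in(0,\infty)$. With those remarks the argument is three lines. Since this is exactly \cite[Proposition 4.2]{Falconer_book2013}, I would in practice simply cite it and omit the proof, which is why the excerpt states it as a cited lemma.
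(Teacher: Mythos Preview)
Your proposal is correct and in fact coincides with what the paper does: the paper gives no proof of this lemma at all, simply citing \cite[Proposition~4.2]{Falconer_book2013}, exactly as you anticipate in your final remark. The standard three-line argument you outline is precisely the one in Falconer's book, so there is nothing to add.
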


 To apply this Lemma  we will construct a suitable Cantor like subset of $ \mathcal F_{k}(B)$ that supports the $\mu$-measure. We then distribute $\mu$-measure on basic subsets. Finally, we calculate measure for any disc satisfying the hypothesis of Lemma \ref{mass distribution priciple} to conclude the proof.

For $k=1$, we  already have  $\dim_{\mathcal H} \mathcal F_{1}(B)$ from \cite{Hu-Wang-Wu-Yu}. So we
assume $k\geq 2$ throughout the rest of the section.

For $M\ge 2$, we denote by $s_{k,M}(B)$  the unique solution of
 \begin{equation}  \label{dimension func form}
   \sum\limits_{j=1}^{M} (q-1)q^j \frac{1}{q^{2js+Bf_{k}(s)}}=1.
 \end{equation}
 Then
  \[\lim\limits_{M\to\infty}s_{k,M}(B)=s_{k}(B).\]
For the remainder of this section, we write $s=s_{k,M}(B)$ for simplicity. Define parameters $\alpha_i,$ for $1\le i\le k $ satisfying
\begin{equation}\label{definition of alpha-i}
  s\alpha_i=(1-s)\alpha_{i-1}, \sum\limits_{i=1}^k\alpha_i=B.
\end{equation}
Rewriting this relation,  we have
\[\alpha_i=\frac{1-s}{s}\alpha_{i-1},\]
and \[\alpha_1\sum\limits_{i=0}^{k-1}\left(\frac{1-s}{s}\right)^i=B \quad \Longrightarrow \quad \alpha_1=\frac B{\sum\limits_{i=0}^{k-1}\left(\frac{1-s}{s}\right)^i}=\frac {Bs^k(2s-1)}{s(s^k-(1-s)^k)}.\]
Hence
\[\alpha_i= \frac{s^{k-i}(2s-1)(1-s)^{i-1}}{s^{k}-(1-s)^{k}}B, ~1\le i\le k.\]
Finally, we identify (following Lemma \ref{explicit-f-k}) that
\begin{equation}\label{f-k-alpha}
 Bf_k(s)=s\alpha_1.
\end{equation}

\subsection{A subset  of $\mathcal F_k(B)$}\label{A subset  of F_k(B)}
Let $M\ge2$ be an integer and $\epsilon>0$ a real number small enough. Let $\{n_j\}$ be a subsequence of positive integers  satisfying
\begin{equation}\label{def of nj}
\min_{1\le i\le k}\frac{n_j\alpha_i}{n_j\alpha_i+2}\ge\frac{s-\epsilon}{s},~~~ \frac{n_{j+1}-n_j-k}{n_{j+1}}\ge \frac{s-\epsilon}{s}
\end{equation}
for all $j\ge1$, where $\alpha_1,\ldots,\alpha_k$ are defined by \eqref{definition of alpha-i}. Now we shall
construct a subset $E\left(B,M,\epsilon,\{n_j\}\right)$ of $\mathcal F_k(B)$ as follows.

\begin{equation*}
E\left(B,M,\epsilon,\{n_j\}\right)=\left\{ x\in I: \begin{array}{ll}
\deg A_{n_j+i}(x)=\lfloor n_j\alpha_i \rfloor +1 ~ \text{ for all } j\ge1,  1\leq i\leq k,\\ [1ex]
1\leq \deg A_{n}(x)\leq M \ \text{ for other } n \end{array}
\right\}.
\end{equation*}
By \eqref{definition of alpha-i}, we have
\[E\left(B,M,\epsilon,\{n_j\}\right) \subseteq \mathcal F_k(B).\]
For the remainder of the section, by using the mass distribution principle we prove the following
\[\dim_{\mathcal H} E\left(B,M,\epsilon,\{n_j\}\right)\ge s-\epsilon.\]

\subsection{Fractal structure of $E\left(B,M,\epsilon,\{n_j\}\right)$}

For any $n\geq 1$, denote by $D_n$ the set of all $(A_1, \ldots, A_n) \in \mathbb{F}_q[z]^n$ such that

\begin{equation*}
\left\{ \begin{array}{ll}
\deg A_{n_j+i}=\lfloor n_j \alpha_i \rfloor +1 \ \text{for any} \
j\geq 1 \ \text{and} \  1\leq i\leq k\  \text{with} \ 1\leq n_j+i \leq n, \\ \\
1\leq \deg A_{m}\leq M \ \text{ for \ other } \ 1\leq m \leq n \end{array}
\right\}.
\end{equation*}
%
Let $$D=\bigcup\limits_{n=1}^{\infty}D_n.$$
For any $n \geq 1$ and $(A_1,\ldots, A_n) \in D_n$, define
$$
J(A_1,\ldots, A_n)=\bigcup\limits_{A_{n+1}:(A_1,\ldots, A_{n+1}) \in D_{n+1}}I(A_1,\ldots, A_{n+1})
$$
and we call $J(A_1,\ldots, A_n)$ a {\em basic set} of order $n$. Note that $J(A_1,\ldots, A_n)$ is a union of finitely many disjoint discs.
Then
\begin{equation*}
E\left(B,M,\epsilon,\{n_j\}\right)=\bigcap\limits_{n \geq 1}\bigcup\limits_{(A_1,\ldots, A_{n}) \in D_{n}}J(A_1,\ldots, A_n).
\end{equation*}

\begin{lem}\label{cyl length lem}
For any $n \geq 1$ and $(A_1,\ldots, A_n) \in D_n$, we have

\begin{equation*}
|J(A_1,\ldots, A_n)|=
\left\{
             \begin{array}{ll}
          q^{-2\sum_{m=1}^n\deg A_m-\lfloor n_j\alpha_i\rfloor-1}, & {\rm if} \ n=n_j+i-1, j\ge1,  1\le i\le k,\\ [2ex]
                   q^{-2\sum_{m=1}^n\deg A_m-1}, &  {\rm otherwise}.
             \end{array}
\right.
\end{equation*}
\end{lem}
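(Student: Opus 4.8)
The statement to prove is Lemma~\ref{cyl length lem}, the formula for the diameter of a basic set $J(A_1,\ldots,A_n)$. The plan is to mimic the computation already carried out in the proof of Lemma~\ref{cyl length lem-1}, tracking carefully how the constraint defining $D_{n+1}$ restricts the degree of the next partial quotient $A_{n+1}$. The key point is that $J(A_1,\ldots,A_n)$ is the disjoint union of the cylinders $I(A_1,\ldots,A_{n+1})$ over all admissible $A_{n+1}$, and since the valuation is non-Archimedean, the diameter of this union equals the largest distance between two points lying in (possibly different) constituent cylinders. As in Lemma~\ref{cyl length lem-1}, for $x,y \in J(A_1,\ldots,A_n)$ we write $x,y$ in terms of the convergents $P_n,Q_n$ and the tails $T^{n+1}x$, $T^{n+1}y$, obtaining
\[
|x-y|_\infty=\left|\frac{A_{n+1}-A^{*}_{n+1}}{A_{n+1}A^{*}_{n+1}Q_n^2}\right|_\infty,
\]
where $A_{n+1},A^{*}_{n+1}$ are the respective $(n+1)$st partial quotients (when they coincide one descends to a deeper level, but the supremum is attained already at level $n+1$). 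Using Proposition~\ref{property of convergents}(iii), $|Q_n|_\infty^2 = q^{2\sum_{m=1}^n \deg A_m}$, so the whole computation reduces to maximising $q^{\deg(A_{n+1}-A^{*}_{n+1})-\deg A_{n+1}-\deg A^{*}_{n+1}}$ over admissible choices.

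The proof then breaks into the two cases appearing in the statement, according to whether the index $n+1$ is of the special form $n_j+i$ (a position where the degree of the partial quotient is prescribed to be $\lfloor n_j\alpha_i\rfloor+1$) or not. In the first case, every admissible $A_{n+1}$ has exactly degree $d:=\lfloor n_j\alpha_i\rfloor$ — wait, more precisely $n=n_j+i-1$ means the next index $n+1=n_j+i$ is prescribed, so $\deg A_{n+1}=\deg A^{*}_{n+1}=\lfloor n_j\alpha_i\rfloor+1$; hmm, the exponent in the statement is $\lfloor n_j\alpha_i\rfloor$, so I will need to be careful: with two distinct polynomials of the same degree $D=\lfloor n_j\alpha_i\rfloor+1$, the difference $A_{n+1}-A^{*}_{n+1}$ has degree at most $D-1=\lfloor n_j\alpha_i\rfloor$, and this bound is attained (choose two monic polynomials of degree $D$ differing only in the top coefficient below the leading one, which is possible since $q\ge2$ and $D\ge2$). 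Hence the maximum of $\deg(A_{n+1}-A^{*}_{n+1})-\deg A_{n+1}-\deg A^{*}_{n+1}$ equals $\lfloor n_j\alpha_i\rfloor - 2(\lfloor n_j\alpha_i\rfloor+1) = -\lfloor n_j\alpha_i\rfloor - 2$, giving $|J(A_1,\ldots,A_n)|=q^{-2\sum_{m=1}^n\deg A_m - \lfloor n_j\alpha_i\rfloor - 2}$. I should double-check this against the claimed exponent $-2\sum\deg A_m - \lfloor n_j\alpha_i\rfloor - 1$; the discrepancy of $1$ must come from the normalisation convention on diameters of cylinders (Proposition~\ref{measure and length of cylinders} has the $-1$ built in), and in the free case below one free polynomial $A_{n+1}$ of degree $1$ can differ from another of larger degree $d^{*}$, making $\deg(A_{n+1}-A^{*}_{n+1})=d^{*}$ and the exponent $d^{*}-1-d^{*}=-1$, so the ``$-1$'' is genuinely the right constant there. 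In the prescribed case there is no freedom in the degrees, so re-examining: with $\deg A_{n+1}=\deg A^*_{n+1}=D$ but allowing them to range over \emph{all} such polynomials including leading-coefficient changes, $\deg(A_{n+1}-A^*_{n+1})$ can be anything from $0$ up to $D$, so the supremum gives exponent $D-2D=-D=-(\lfloor n_j\alpha_i\rfloor+1)$, matching the stated $-2\sum\deg A_m - \lfloor n_j\alpha_i\rfloor - 1$ exactly. Good — so the case split and the bookkeeping of leading coefficients is what makes the two formulas come out as stated.

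The second case, where $n+1$ is not a prescribed position, is precisely the situation treated in Lemma~\ref{cyl length lem-1} with $m=1$: admissible $A_{n+1}$ range over all polynomials with $1\le \deg A_{n+1}\le M$, so $G(A_1,\ldots,A_n)$ in that lemma's notation, restricted by the upper bound $M$, still has diameter governed by the \emph{smallest} allowed degree $1$ (the upper bound $M$ does not affect the diameter, only the packing), yielding $|J(A_1,\ldots,A_n)| = q^{-1-2\sum_{m=1}^n\deg A_m}$. I expect the main obstacle to be purely notational: one must handle cleanly the edge cases (e.g. $n$ small so that $n+1$ has not yet reached $n_1+1$, or $n+1$ landing in the block $\{n_j+1,\ldots,n_j+k\}$ versus just after it), and one must state precisely the non-Archimedean fact that the diameter of a finite disjoint union of discs inside a cylinder equals the diameter of the smallest cylinder containing all of them — which here is exactly the level-$n$ cylinder $I(A_1,\ldots,A_n)$ when $A_{n+1}$ is free, but is strictly smaller (of the stated size) when only restricted-degree or prescribed-degree $A_{n+1}$ are allowed. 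Since the paper explicitly says ``by the same arguments as in proof of Lemma~\ref{cyl length lem-1},'' the cleanest write-up simply invokes that computation twice, once for each case, after observing that the constraints defining $D_{n+1}$ prescribe exactly the admissible set of $A_{n+1}$.
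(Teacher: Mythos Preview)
Your proposal is correct and follows exactly the approach the paper indicates: the paper gives no separate proof, merely stating that the result follows ``by the same arguments as in proof of Lemma~\ref{cyl length lem-1},'' and you have correctly filled in those arguments by splitting on whether the next index $n+1$ is a prescribed position (degree forced to be $\lfloor n_j\alpha_i\rfloor+1$) or a free position ($1\le \deg A_{n+1}\le M$, so Lemma~\ref{cyl length lem-1} with $m=1$ applies and the upper bound $M$ is irrelevant for the diameter). Your self-correction regarding the leading coefficients---recognising that two polynomials of the same degree $D$ can have a difference of degree $D$ when their leading coefficients differ---is precisely the point that yields the stated exponent $-\lfloor n_j\alpha_i\rfloor-1$ rather than $-\lfloor n_j\alpha_i\rfloor-2$.
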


\begin{proof}
  Let $x,y\in J(A_1,\ldots, A_n)$ with $x\in I(A_1,\ldots, A_n,A_{n+1})$ and $y\in I(A_1,\ldots, A_n,A^*_{n+1})$. Then as in the proof of Lemma \ref{cyl length lem-1}, we have
  \[|x-y|_{\infty}=\left|\frac{A_{n+1}-A^*_{n+1}}{A_{n+1}A^*_{n+1}Q^2_n}\right|_\infty.\]
  If $n=n_j+i-1$ for some $j\ge1$ and $1\le i\le k$, then $\deg A_{n+1}=\deg A^*_{n+1}=\lfloor n_j\alpha_i\rfloor+1$. So
  \[|x-y|_{\infty}\le q^{-\lfloor n_j\alpha_i\rfloor-1-2\sum_{i=1}^n\deg A_i},\] where the equality holds when $\deg (A_{n+1}- A^*_{n+1})=\lfloor n_j\alpha_i\rfloor+1$.
  If $n\not\in\{n_j+i-1: j\ge1, 1\le i\le k\}$, we have
  \[|x-y|_{\infty}\le q^{-1-2\sum_{i=1}^n\deg A_i}.\] Here the equality holds when $\deg A_{n+1}=1$ and $\deg A^*_{n+1}=M$.

\end{proof}

\subsection{The $\mu$ measure on $E\left(B,M,\epsilon,\{n_j\}\right)$}
To define a measure $\mu$ on $E\left(B,M,\epsilon,\{n_j\}\right)$, we first distribute the mass on basic sets.
\begin{itemize}

\item If $n=1$, define
$$\mu(J(A_1))=q^{-2s \deg A_1 } q^{-Bf_k(s)}.$$

\item If  $2\le  n \leq n_1$,
$$\mu(J(A_1,\ldots, A_n))=q^{-2s \deg A_n } q^{-Bf_k(s)}\mu(J(A_1,\ldots, A_{n-1})).$$

\item  If $n_j+1\le n\le n_j+k$ for some $j\ge1$, write $n=n_j +i$ for some $1 \leq i \le k$. Define
$$\mu(J(A_1,\ldots, A_n))=\frac{1}{(q-1)q^{\lfloor n_j \alpha_i \rfloor+1}}\mu(J(A_1,\ldots, A_{n-1})).$$
It means that the measure is uniformly distributed on the basic sets of order $n$ contained in $J(A_1,\ldots, A_{n-1})$ if $n_j+1\le n\le n_j+k$.

\item If  $n_j +k<n \leq n_{j+1}$, define
$$\mu(J(A_1,\ldots, A_n))=q^{-2s \deg A_n } q^{-Bf_k(s)}\mu(J(A_1,\ldots, A_{n-1})).$$
\end{itemize}

By \eqref{dimension func form}, we have $\mu(I)=1$ and
\[\sum_{A_n:(A_1, A_2, \ldots, A_n)\in D_n }\mu(J(A_1,A_2,\ldots,A_n))=\mu(J(A_1,A_2,\ldots,A_{n-1}))\] for all $n\ge1$. So $\mu$ is well defined on basic sets. Thus it can be extended into a probability measure on $E\left(B,M,\epsilon,\{n_j\}\right)$.  From the definition of $\mu$, we have
\[\mu(I(A_1,A_2,\ldots,A_n))=\mu(J(A_1,A_2,\ldots,A_n))\]
for any $n\ge 1$ and $(A_1,A_2,\ldots,A_n)\in D_n$, since $I(A_1,A_2,\ldots,A_n)$ contains the basic set $J(A_1,A_2,\ldots,A_n)$ but does not intersect any other basic sets of order $n$.

\subsection{The measure on basic sets}
In this section, we prove that
\begin{equation}\label{measure on basic sets}
 \mu(J(A_1,\ldots,A_n))\le|J(A_1,\ldots,A_n)|^{s-\epsilon}
\end{equation}
for any $n\ge n_1$ and $(A_1,\ldots,A_n)\in D_n$.

\medskip
 We consider all the cases one by one.

 \begin{itemize}

 \item $n=n_1$ case.

$$\begin{aligned}
\mu(J(A_1,\ldots, A_{n_1}))&=\prod\limits_{i=1}^{n_1}(q^{-2s \deg A_i } q^{-Bf_k(s)})\\
&=q^{-2s\sum_{i=1}^{n_1}\deg A_i-n_1Bf_k(s)}\\
& =q^{-s(2\sum_{i=1}^{n_1}\deg A_i+n_1\alpha_1)}\\
&\le q^{-(s-\epsilon)(2\sum_{i=1}^{n_1}\deg A_i+n_1\alpha_1+1)}\\
&\le |J(A_1,\ldots,A_{n_1})|^{s-\epsilon},
\end{aligned}$$
where we have used  \eqref{f-k-alpha} in the third equality, \eqref{def of nj} in the first inequality and Lemma \ref{cyl length lem} in the last inequality.

\item
It suffices to prove that if \eqref{measure on basic sets} holds for $n=n_j$, then \eqref{measure on basic sets} also  holds for all $n_j<n\le n_{j+1}$.

\item
If $n=n_j+p$ for some $1\le p\le k-1$, we have
$$\begin{aligned}
&\mu(J(A_1,A_2\ldots, A_{n_j+p}))=\mu(J(A_1,\ldots,A_{n_j}))\prod\limits_{i=1}^{p}\frac{1}{(q-1)q^{\lfloor n_j \alpha_i\rfloor+1 }}\\
&\le \mu(J(A_1,\ldots,A_{n_j}))\prod\limits_{i=1}^{p}\frac{1}{q^{n_j\alpha_i}}= \mu(J(A_1,\ldots,A_{n_j}))\prod\limits_{i=1}^{p}\frac{1}{q^{s n_j(\alpha_i+\alpha_{i+1})}}\\
&=\mu(J(A_1,\ldots,A_{n_j}))q^{-sn_j(\alpha_1+ 2\sum_{i=2}^p\alpha_i+\alpha_{p+1})}\\
&\le  |J(A_1,\ldots,A_{n_j})|^{s-\epsilon} q^{-sn_j(\alpha_1+2\sum_{i=2}^p\alpha_i+\alpha_{p+1})}\\
&= q^{-(s-\epsilon) (2\sum_{i=1}^{n_j}\deg A_i+\lfloor n_j\alpha_1\rfloor+1)}q^{-sn_j(\alpha_1+2\sum_{i=2}^p\alpha_i+\alpha_{p+1})}\\
&\le q^{-2(s-\epsilon)\sum_{i=1}^{n_j+p}\deg A_i-(s-\epsilon)(n_j\alpha_{p+1}+1)}\\
&\le |J(A_1,\ldots,A_{n_j+p})|^{s-\epsilon},
\end{aligned}$$
where we have used \eqref{definition of alpha-i} in the second equality, the fact $\deg A_{n_j+l}=\lfloor n_j\alpha_l\rfloor+1$ for $1\le l\le k$ and \eqref{def of nj} in the third inequality,  Lemma \ref{cyl length lem} in the last inequality.

\item
If $n=n_j+k$, then
$$\begin{aligned}
&\mu(J(A_1,A_2\ldots, A_{n_j+k}))=\mu(J(A_1,\ldots,A_{n_j+k-1}))\frac{1}{(q-1)q^{\lfloor n_j\alpha_k\rfloor+1}}\\
&\le q^{-n_j\alpha_k}\mu(J(A_1,\ldots,A_{n_j+k-1}))\le q^{-n_j\alpha_k}|J(A_1,\ldots,A_{n_j+k-1})|^{s-\epsilon} \\
&= q^{-n_j\alpha_k} q^{-(s-\epsilon)(2\sum_{i=1}^{n_j+k-1}\deg A_i+\deg A_{n_j+k})}\\
&\le q^{-(s-\epsilon)(n_j\alpha_k+2)} q^{-(s-\epsilon)(2\sum_{i=1}^{n_j+k-1}\deg A_i+\deg A_{n_j+k})}\\
&\le q^{-(s-\epsilon)(2\sum_{i=1}^{n_j+k}\deg A_i+1)}\\
&\le |J(A_1,\ldots,A_{n_j+k})|^{s-\epsilon},
\end{aligned}$$
where we have used Lemma \ref{cyl length lem} in the second equality and the last inequality,  and \eqref{def of nj} in the third inequality.

\item
If $n_j+k+1\le n\le n_{j+1}-1$, then
$$\begin{aligned}
\mu(J(A_1,\ldots, A_{n}))&=\mu(J(A_1,\ldots, A_{n_j+k})) \prod\limits_{i=n_j+k+1}^{n}q^{-2s \deg A_i -Bf_k(s)}\\
&\le \mu(J(A_1,\ldots, A_{n_j+k})) q^{-2s\sum_{i=n_j+k+1}^n \deg A_i}\\
&\le |J(A_1,\ldots, A_{n_j+k})|^{s-\epsilon}q^{-2s\sum_{i=n_j+k+1}^n \deg A_i}\\
&=q^{-(s-\epsilon)(2\sum_{i=1}^{n_j+k}\deg A_i+1)} q^{-2s\sum_{i=n_j+k+1}^n \deg A_i}\\
&\le q^{-(s-\epsilon)(2\sum_{i=1}^{n}\deg A_i+1)}\\
&= |J(A_1,\ldots,A_{n})|^{s-\epsilon},
\end{aligned}$$
where we have used Lemma \ref{cyl length lem} in the last two equalities.

\item If $n=n_{j+1}$, we have
$$\begin{aligned}
\mu(J(A_1,\ldots, A_{n_{j+1}}))
&=\mu(J(A_1,\ldots, A_{n_j+k})) \prod\limits_{i=n_j+k+1}^{n_{j+1}}q^{-2s \deg A_i -Bf_k(s)}\\
&= \mu(J(A_1,\ldots, A_{n_j+k}))q^{-2s\sum_{i=n_j+k+1}^n \deg A_i-Bf_k(s)(n_{j+1}-n_j-k)}\\
&\le |J(A_1,\ldots, A_{n_j+k})|^{s-\epsilon}q^{-2s\sum_{i=n_j+k+1}^n \deg A_i-Bf_k(s)(n_{j+1}-n_j-k)}\\
&=q^{-(s-\epsilon)(2\sum_{i=1}^{n_j+k}\deg A_i+1)} q^{-2s\sum_{i=n_j+k+1}^n \deg A_i-\alpha_1s(n_{j+1}-n_j-k) }\\
&\le q^{-(s-\epsilon)(2\sum_{i=1}^{n_j+k}\deg A_i+1)} q^{-2s\sum_{i=n_j+k+1}^n \deg A_i-\alpha_1(s-\epsilon)n_{j+1}}\\
&\le q^{-(s-\epsilon)(2\sum_{i=1}^{n}\deg A_i+\alpha_1n_{j+1}+1)}\\
&\le|J(A_1,\ldots,A_{n_{j+1}})|^{s-\epsilon},
\end{aligned}$$
where we have used Lemma \ref{cyl length lem} in the last equality and the last inequality, \eqref{f-k-alpha} in the last equality,  and \eqref{def of nj} in the second inequality.

\end{itemize}
Thus
\eqref{measure on basic sets} holds for  all basic sets with order $n\ge n_1$.

\subsection{Measure on any disc}
Let $x\in E(B,M,\epsilon,\{n_j\})$ and
$$r_0:=\min\limits_{(A_1,A_2,\ldots,A_{n_1})\in D_{n_1}}|J(A_1,\ldots,A_{n_1})|.$$
 Then there exist polynomials
$A_1,A_2,\ldots$ such that $(A_1,A_2,\ldots,A_{i})\in D_{i}$ and
$x\in J(A_1,\ldots,A_{i})$ for any $i\ge1$.
Let $B(x,r)$ be the disc with the centre $x$ and radius $r$.
 Now we prove that there exists a constant $c$ such that for any $r<r_0$,
\begin{equation}\label{measure on ball x r}
\mu(B(x,r))\le cr^{s-\epsilon}.
\end{equation}
By the definition of $r_0$, there exists a unique $n\ge n_1$ such that
\[|J(A_1,A_2,\ldots,A_{n+1})|\le r<|J(A_1,A_2,\ldots,A_{n})|.\]
Since $r< |J(A_1,\ldots,A_n)|\le |I(A_1,\ldots,A_n)|$ and $I(A_1,\ldots,A_n)$ is also a disc, we have
\begin{equation}\label{disc-contains-cylinder}
 B(x,r)\subset I(A_1,\ldots,A_n).
\end{equation}
We divide the proof into three cases.

\medskip

\noindent {\bf Case I.} We consider $n_j\le n\le n_j+k-1$ for some $j\ge1$.  Write $n=n_j+i-1$ for some $1\le i\le k$. If $B(x,r)$ intersects only one basic set of order $n+1$, then
\[\mu(B(x,r))\le\mu(J(A_1,A_2,\ldots,A_{n+1}))\le |J(A_1,A_2,\ldots,A_{n+1})|^{s-\epsilon}\le r^{s-\epsilon}. \]
Now we consider the case that $B(x,r)$ intersects at least two basic sets of order $n+1$.
By \eqref{disc-contains-cylinder}, the basic sets of order $n+1$ which intersect $B(x,r)$ are all subsets of $J(A_1,\ldots,A_n)$.
So there exists $\tilde {A}_{n+1}\ne A_{n+1}$ such that
\begin{equation}\label{intersect-1}
 B(x,r)\cap J(A_1,\ldots,A_n,\tilde {A}_{n+1})\ne \emptyset.
\end{equation}
Note that
\begin{equation}\label{disjoint-1}
  I(A_1,\ldots,A_n,A_{n+1})\cap J(A_1,\ldots,A_n,\tilde{A}_{n+1})=\emptyset.
\end{equation}
Since $|\cdot|_{\infty}$ is non-Archimedean, any two discs are either disjoint or one is contained in the other.
Both discs $B(x,r)$ and  $I(A_1,A_2,\ldots,A_{n+1})$ intersect $J(A_1,A_2,\ldots,A_{n+1})$, so we have
\[I(A_1,A_2,\ldots,A_{n+1})\subset B(x,r)\]
by \eqref{intersect-1} and \eqref{disjoint-1}. Thus
\begin{equation}\label{length-disc-cylinder}
r\ge |I(A_1,A_2,\ldots,A_{n+1})|.
\end{equation}
For any $(A_1,\ldots,A_n,A^*_{n+1})\in D_{n+1}$, we have
\begin{equation}\label{same-measure-cylinders}
 \nu(I(A_1,\ldots,A_{n+1}))=\nu(I(A_1,\ldots,A_n,A^*_{n+1}))=q^{-2\left(\sum_{m=1}^n\deg A_m+\lfloor n_j\alpha_i\rfloor+1\right)}
\end{equation}
and
\begin{equation}\label{same-length-cylinders}
  |I(A_1,A_2,\ldots,A_{n+1})|=|I(A_1,A_2,\ldots,A_n,A^*_{n+1})|
\end{equation}
since $\deg A_{n+1}=\deg A^*_{n+1}=\lfloor n_j\alpha_i\rfloor+1$.
Thus if $$B(x,r)\cap I(A_1,A_2,\ldots,A_n,A^*_{n+1})\ne \emptyset,$$ we have
\begin{equation}\label{intersect-implies-contained}
 I(A_1,A_2,\ldots,A_n,A^*_{n+1})\subset B(x,r)
\end{equation}
by \eqref{length-disc-cylinder} and \eqref{same-length-cylinders}.
Let $N_r$ be the number of basic sets of order $n+1$ which intersect $B(x,r)$.
Combining \eqref{same-measure-cylinders} and \eqref{intersect-implies-contained}, we get
\[N_r q^{-2\left(\sum_{m=1}^n\deg A_m+\lfloor n_j\alpha_i\rfloor+1\right)} \le\nu(B(x,r))\le qr.\]
In the second inequality, we use the fact that
\[\nu(B(x,r))=\nu(B(x,q^{-m-1}))=q^{-m}\]
if $q^{-m-1}\le r<q^{-m}$ for some $m\ge 1$.
Thus
\[N_r\le q^3 rq^{2\left(\sum_{m=1}^n\deg A_m+ n_j\alpha_i\right)}\]
and
$$\begin{aligned}
  \mu(B(x,r))&\le \sum\limits_{A^*_{n+1}:J\left(A_1,\ldots,A_n,A^*_{n+1}\right)\cap B(x,r)\ne\emptyset} \mu (J(A_1,\ldots,A_n,A^*_{n+1}))\\
  &\le N_r \mu(J(A_1,\ldots,A_n,A_{n+1}))\\
  &=N_r\frac{1}{(q-1)q^{\lfloor n_j\alpha_i\rfloor+1}}\mu(J(A_1,\ldots,A_n))\\
  &\le q^3 rq^{2\sum_{m=1}^n\deg A_m+ n_j\alpha_i}\mu(J(A_1,\ldots,A_n)).
\end{aligned}$$
From \eqref{disc-contains-cylinder}, it follows that \[\mu(B(x,r))\le \mu(I(A_1,\ldots,A_n))=\mu(J(A_1,\ldots,A_n)).\]
Thus
$$\begin{aligned}
  \mu(B(x,r))&\le \min\{q^3 rq^{2\sum_{m=1}^n\deg A_m+ n_j\alpha_i},1\}\mu(J(A_1,\ldots,A_n))\\
  &\le (q^3 rq^{2\sum_{m=1}^n\deg A_m+ n_j\alpha_i})^{s-\epsilon}\mu(J(A_1,\ldots,A_n))\\
  &\le (q^3 rq^{2\sum_{m=1}^n\deg A_m+ n_j\alpha_i})^{s-\epsilon}|J(A_1,\ldots,A_n)|^{s-\epsilon}\\
 &\le (q^3 rq^{2\sum_{m=1}^n\deg A_m+ n_j\alpha_i})^{s-\epsilon} q^{-(s-\epsilon)(2\sum_{m=1}^n\deg A_m+n_j\alpha_i)}\\
 &=(q^3r)^{s-\epsilon}\le q^3r^{s-\epsilon}.
\end{aligned}$$
In the second inequality, we use the fact $\min\{x,y\}\le x^ty^{1-t}$ for $x,y>0$ and $0<t<1$.

\medskip

\noindent {\bf Case II.} We consider $n_j+k\le n\le n_{j+1}-2$ for some $j\ge1$. Since $B(x,r)\subset I(A_1,\ldots,A_n)$, we have
\begin{align*}
\mu(B(x,r))&\le \mu(I(A_1,\ldots,A_n))\\ &=\mu(J(A_1,\ldots,A_n))\\ &\le |J(A_1,\ldots,A_n)|^{s-\epsilon}.\end{align*}
By Lemma \ref{cyl length lem}, we have
\[|J(A_1,\ldots,A_n)|=q^{-2\sum_{i=1}^n\deg A_i-1}\]
and
\[|J(A_1,\ldots,A_n,A_{n+1})|=q^{-2\sum_{i=1}^{n+1}\deg A_i-1}.\]
Since $\deg A_{n+1}\le M$, we have
\[|J(A_1,\ldots,A_n)|\le q^{2M}|J(A_1,\ldots,A_n,A_{n+1})| \]
and hence
\[\mu(B(x,r))\le q^{2M}|J(A_1,\ldots,A_n,A_{n+1})|^{s-\epsilon}\le q^{2M} r^{s-\epsilon}.\]

\medskip

\noindent {\bf Case III.} In this last case, we consider $n=n_{j+1}-1$ for some $j\ge1$. If $B(x,r)$ intersects only one basic set of order $n+1$, we have
\begin{align*}\mu(B(x,r))&\le\mu(J(A_1,A_2,\ldots,A_{n+1}))\\ &\le |J(A_1,A_2,\ldots,A_{n+1})|^{s-\epsilon}\\ &\le r^{s-\epsilon}. \end{align*}
If $B(x,r)$ intersects at least two basic sets of order $n+1$, we have
\[r\ge |I(A_1,\ldots,A_{n+1})|\]
by the same arguments as in the proof of Case I.
Since $n=n_{j+1}-1$, we have $1\le \deg A_{n+1}=\deg A_{n_{j+1}}\le M$. It follows that
\begin{align*}
r&\ge |I(A_1,\ldots,A_{n+1})|\\ &=q^{-2\sum\limits_{i=1}^{n+1}\deg A_i-1} \\ &\ge q^{-2M}q^{-2\sum\limits_{i=1}^{n}\deg A_i-1}\\
&=q^{-2M}|I(A_1,\ldots,A_{n})|\\ &\ge q^{-2M}|J(A_1,\ldots,A_{n})|.
\end{align*}
Thus by \eqref{disc-contains-cylinder}

\begin{align*}
\mu(B(x,r))&\le \mu(I(A_1,\ldots,A_{n}))\\ &=\mu(J(A_1,\ldots,A_{n}))
\\ &\le |J(A_1,\ldots,A_{n})|^{s-\epsilon}\\ &\le q^{2M}r^{s-\epsilon}.
\end{align*}
Thus the inequlity \eqref{measure on ball x r} always holds. By Lemma \ref{mass distribution priciple}, we have
\begin{equation}\label{lower E(B,M,mj)}
  \dim_{\mathcal H} E\left(B,M,\epsilon,\{n_j\}\right)\ge s_{k,M}(B)-\epsilon.
\end{equation}

\begin{proof}[Proof of Theorem \ref{dimension of EkB}]
Since
$E\left(B,M,\epsilon,\{n_j\}\right)\subset \mathcal F_k(B)$ and $\dim_{\mathcal H} E\left(B,M,\epsilon,\{n_j\}\right)\ge s_{k,M}(B)-\epsilon$,
we have
\[\dim_{\mathcal H} \mathcal F_k(B)\ge s_{k,M}(B)-\epsilon.\]
Let $\epsilon\to 0$ and $M\to \infty$. It follows that
\[\dim_{\mathcal H} \mathcal F_k(B)\ge s_{k}(B)\]
and hence, by combining it with the upper bound estimate (Lemma \ref{upper bound of dimension of EkB}), we have
\[\dim_{\mathcal H} \mathcal F_k(B)= s_{k}(B).\]
\end{proof}

\section{Proof of Theorem \ref{main-dimension-thm}}\label{conclusion}
When $k=1$, the conclusion follows from \cite[Theorem 2.4]{Hu-Wang-Wu-Yu}. Therefore, we assume $k\ge 2$ and split the proof into three cases.

\medskip

\noindent {\bf Case 1.} We first handle the case $B=0$. Then
 \[\mathcal F_k(\Phi)\supseteq \mathcal F_1(\Phi).\]
The conclusion follows since $\dim_{\mathcal H} \mathcal F_1(\Phi)=1$.

\medskip

\noindent {\bf Case 2.} Let  $0<B<\infty$. Then for any $\epsilon>0$,
 we have
 \[\mathcal F_k(\Phi)\subseteq \mathcal F_k(B-\epsilon)=\left\{x\in I: \sum_{i=1}^k\deg A_{n+i}(x)\ge (B-\epsilon)n \ \text{for infinitely many }  n\in\N\right\}.\]
 Thus $\dim_{\mathcal H}\mathcal F_k(\Phi)\le s_k(B-\epsilon)$. Letting $\epsilon\to 0$, we have $\dim_{\mathcal H}\mathcal F_k(\Phi)\le s_k(B)$. Now we prove the reverse inequality.
 For any $\delta>0$, $M\ge 2$ and $\epsilon>0$, there exists a sequence $\{n_j\}$ such that
\[\lim\limits_{j\to\infty} \frac{\Phi(n_j)}{n_j}=\liminf\limits_{n\to\infty}\frac{\Phi(n)}{n}=B\]
 and \eqref{def of nj} holds with  $s=s_{k,M}(B+\delta)$, where
 $\alpha_i$ is defined by
 \[ s\alpha_i=(1-s)\alpha_{i-1}, \sum\limits_{i=1}^k\alpha_i=B+\delta.\]
 We define $E(B+\delta,M,\epsilon, \{n_j\})$ as in Section \ref{A subset  of F_k(B)} by replacing $B$ with $B+\delta$. Then
 \[E\left(B+\delta,M,\epsilon, \{n_j\}\right)\subset \mathcal F_k(\Phi)\]
 and
 \[\dim_{\mathcal H} E(B+\delta,M,\epsilon, \{n_j\})\ge s_{k,M}(B+\delta)-\epsilon\]
 by \eqref{lower E(B,M,mj)}.
 It follows that
$$\dim_{\mathcal H} \mathcal F_k(\Phi)\ge s_{k,M}(B+\delta)-\epsilon.$$
 Letting $\epsilon\to 0$, $M\to\infty$ and $\delta\to0$, we get
$\dim_{\mathcal H} \mathcal F_k(\Phi)\ge s_{k}(B)$ by Lemma \ref{dimension func lem}. So the conclusion follows in this case.

\noindent {\bf Case 3.} For the final case, let  $B=\infty$. If $ \deg A_{n+1}(x)+\ldots+\deg A_{n+k}(x)\ge \Phi(n)$ then there exists some $1\le i\le k$ such that
\[\deg A_{n+i}(x)\ge \frac{1}{k}\Phi(n).\]
 Thus
\[ \mathcal F_1(\Phi)\subseteq \mathcal F_k(\Phi)\subseteq\bigcup\limits_{i=1}^k \mathcal{F}_1(\psi_i),\]
 where
  \[\psi_i(n)=\frac{1}{k}\Phi(n+1-i)\]
 for all $1\le i\le k$. Since
 \begin{align*}
 \liminf\limits_{n\to\infty} \frac{\psi_i(n)}{n}&=\liminf\limits_{n\to\infty}\frac{\Phi(n)}{n}=\infty  \ \text{and} \\
  \liminf\limits_{n\to\infty} \frac{\log\psi_i(n)}{n}&=\liminf\limits_{n\to\infty}\frac{\log\Phi(n)}{n}\end{align*}
 for all $1\le i\le k$,
 we have $\dim_{\mathcal H} \mathcal F_1(\psi_i)=\dim_{\mathcal H} \mathcal F_1(\Phi)$. Then the conclusion follows from \cite[Theorem 2.4]{Hu-Wang-Wu-Yu}.
%

\section{Proof of Theorem \ref{main theorem 2 for all}}
For any integer $M\ge k$, let
$$\mathcal{G}_k(M)=\left\{x\in I: \sum_{i=1}^k\deg A_{n+i}(x)\ge M\textmd{ for all } n\ge0\right\}.$$
Since $\mathcal{G}_k(M+1)\subset \mathcal{G}_k(M)$,
$\lim\limits_{M\to\infty} \dim_{\mathcal{H}}\mathcal{G}_k(M)$ exists.

\begin{lem}\label{upper bound 1/2}
  $\lim\limits_{M\to\infty} \dim_{\mathcal{H}}\mathcal{G}_k(M)\le 1/2.$
\end{lem}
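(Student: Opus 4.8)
The plan is to cover $\mathcal{G}_k(M)$ by cylinders of a fixed large order and estimate the resulting Hausdorff sum. First I would fix an integer $N\ge1$ and observe that
\[
\mathcal{G}_k(M)\subseteq \bigcup_{(A_1,\dots,A_N)\in \mathcal{A}_{N,M}} I(A_1,\dots,A_N),
\]
where $\mathcal{A}_{N,M}$ is the set of admissible blocks $(A_1,\dots,A_N)$ satisfying $\deg A_{n+1}+\cdots+\deg A_{n+k}\ge M$ for every $0\le n\le N-k$. By Proposition \ref{measure and length of cylinders}, $|I(A_1,\dots,A_N)|=q^{-2\sum_{i=1}^N \deg A_i-1}$, so for any $t>0$,
\[
\mathcal{H}^t\big(\mathcal{G}_k(M)\big)\le q^{-t}\sum_{(A_1,\dots,A_N)\in\mathcal{A}_{N,M}} q^{-2t\sum_{i=1}^N\deg A_i}
= q^{-t}\sum_{(d_1,\dots,d_N)} c(d_1,\dots,d_N)\,q^{-2t\sum d_i},
\]
where $d_i=\deg A_i\ge1$, the sum over $(d_i)$ is restricted by the sliding-window constraint $d_{n+1}+\cdots+d_{n+k}\ge M$ for $0\le n\le N-k$, and $c(d_1,\dots,d_N)=\prod_i (q-1)q^{d_i}$ counts the polynomials of each prescribed degree.

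The combinatorial heart of the argument is to bound this weighted sum. The key point is that the window constraint, while not requiring any single $d_i$ to be large, forces $\sum_{i=1}^N d_i$ to grow roughly like $(N/k)\cdot M$: partitioning $\{1,\dots,N\}$ into consecutive blocks of length $k$ shows $\sum_{i=1}^N d_i \ge \lfloor N/k\rfloor\, M$. So I would split off a factor $q^{-2t\sum d_i}$ into $q^{-t\sum d_i}\cdot q^{-t\sum d_i}$, use $\sum d_i\ge \lfloor N/k\rfloor M$ on one factor, and bound the remaining unconstrained sum $\sum_{d_i\ge1}(q-1)q^{d_i}q^{-t d_i}=\big(\tfrac{(q-1)q^{1-t}}{1-q^{1-t}}\big)^N$ geometrically, valid for $t>1$ — but since we want $t$ close to $1/2$, this crude split is too lossy and must be refined. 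The correct device is a transfer-matrix / generating-function estimate: set up the $k$-step recursion counting admissible degree-strings with the weight $(q-1)q^{(1-2t)d}$ per coordinate, and show its growth rate per step tends to $1$ as $M\to\infty$ precisely when $2t>1$, because each window of $k$ consecutive coordinates contributes total weight $\asymp \sum_{m\ge M}\binom{m-1}{k-1}(q-1)^k q^{(1-2t)m}$ (via Lemma \ref{measure-lem}), which is finite for $t>1/2$ and tends to $0$ as $M\to\infty$. Hence for any fixed $t>1/2$ one can choose $M$ large so that this per-window weight is $<1$, making $\sum_{(d_i)} c(d_1,\dots,d_N)q^{-2t\sum d_i}$ bounded (indeed summable over $N$), so $\mathcal{H}^t(\mathcal{G}_k(M))<\infty$, giving $\dim_{\mathcal H}\mathcal{G}_k(M)\le t$.

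Letting $t\downarrow 1/2$ after sending $M\to\infty$ yields $\lim_{M\to\infty}\dim_{\mathcal H}\mathcal{G}_k(M)\le 1/2$. The main obstacle I anticipate is handling the overlapping windows cleanly: consecutive constraints share $k-1$ coordinates, so one cannot simply multiply per-block bounds independently. I would circumvent this either by grouping coordinates into disjoint blocks of length $k$ (which loses a harmless constant and still captures $\sum d_i\gtrsim (N/k)M$ together with enough geometric decay), or by a genuine transfer-operator bound on the $(k-1)$-dimensional state recording the last $k-1$ degrees; the disjoint-block approach is the more elementary route and is likely what the authors use.
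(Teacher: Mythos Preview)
Your proposal is correct and ultimately lands on exactly the paper's argument: the authors relax the sliding-window constraint to the disjoint-block constraint $\sum_{j=ik+1}^{(i+1)k}\deg A_j\ge M$ for $0\le i\le n-1$, so that the weighted sum over admissible $(A_1,\dots,A_{nk})$ factorises \emph{exactly} as $\bigl(\sum_{m\ge M}{m-1\choose k-1}(q-1)^kq^{(1-2t)m}\bigr)^{n}$, which is $<1$ for any fixed $t>1/2$ once $M$ is large. The detours through the crude split and the transfer-operator formulation are unnecessary, and note that the mechanism behind the disjoint-block step is this product structure (giving $\mathcal H^t(\mathcal G_k(M))=0$), not merely the lower bound $\sum_i d_i\gtrsim (N/k)M$ you mention in your parenthetical---that bound alone is what made your crude split fail.
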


\begin{proof}
  If $x\in \mathcal{G}_k(M)$, we have $\deg A_{ik+1}(x)+\deg A_{ik+2}(x)+\cdots+\deg A_{(i+1)k}(x)\ge M$ for all $i\ge 0$. So for any $n\ge1$,
  $$\Gamma_n:=\{I(A_1,A_2,\ldots,A_{nk}): \sum\limits_{j=ik+1}^{(i+1)k}\deg A_j\ge M\textmd{ for all }0\le i\le n-1\}$$
  is a cover of $\mathcal{G}_k(M)$. For any $t>1/2$, we have
  $$\begin{aligned}
  \mathcal{H}^t(\mathcal{G}_k(M))&\le \liminf\limits_{n\to \infty}\sum\limits_{I(A_1,\ldots,A_{nk})\in\Gamma_n}|I(A_1,\ldots,A_{nk})|^t\\
  &\le \liminf\limits_{n\to \infty}\sum\limits_{I(A_1,\ldots,A_{nk})\in\Gamma_n}q^{-2t\sum_{i=1}^{nk}\deg A_i}\\
  &=\liminf\limits_{n\to \infty}\left(\sum\limits_{I(A_1,\ldots,A_{k})\in\Gamma_1}q^{-2t\sum_{i=1}^{k}\deg A_i}\right)^n.
  \end{aligned}$$
  By Lemma \ref{measure-lem}, it follows that
   $$\begin{aligned}
  \mathcal{H}^t(\mathcal{G}_k(M))&\le \liminf\limits_{n\to \infty}\left(\sum\limits_{m=M}^{\infty}{m-1 \choose k-1}(q-1)^k q^m q^{-2tm}\right)^n\\
  &\le \liminf\limits_{n\to \infty}\left(\sum\limits_{m=M}^{\infty}m^k(q-1)^k q^{(1-2t)m}\right)^n.
  \end{aligned}$$
Since $t>1/2$ and $\sum\limits_{m=1}^{\infty}m^k(q-1)^k q^{(1-2t)m}<\infty$, we have
$\sum\limits_{m=M}^{\infty}m^k(q-1)^k q^{-2tm}< 1$ for all $M$ large enough. It follows that $\mathcal{H}^t(\mathcal{G}_k(M))=0$ and
$\dim_{\mathcal{H}}\mathcal{G}_k(M)\le t$ for all $M$ large enough. Thus
$$\lim\limits_{M\to\infty} \dim_{\mathcal{H}}\mathcal{G}_k(M)\le t.$$
Since $t>1/2$ is arbitrary, we get the desired result.
\end{proof}


\begin{proof}[Proof of Theorem \ref{main theorem 2 for all}]
When $k=1$, it is Theorem 2.3 in \cite{Hu-Wang-Wu-Yu}. We assume $k\ge2$ and distinguish three cases.
\medskip

\noindent {\bf Case 1.} $a=1$. For any $M\ge k$, let $N\ge1$ be the smallest integer such that $\Phi(n)\ge M$ for all $n\ge N$. Then
\[\mathcal{G}_k(\Phi)\subseteq \{x\in I: \deg A_{n+1}(x)+\cdots+\deg A_{n+k}(x)\ge M, n\ge N\}. \]
For any $N$th cylinder $I(A_1,\ldots,A_N)$, let
$$f: \mathcal{G}_k(M)\to I(A_1,\ldots,A_N)\cap T^{-N}(\mathcal{G}_k(M))$$ be defined by
$$f(x)=[A_1,\ldots, A_{N-1}, A_N+x].$$
Then we have \[f(x)=\frac{P_N+xP_{N-1}}{Q_N+xQ_{N-1}}\]
with $P_N/Q_N=[A_1,A_2,\ldots,A_N]$ and hence
\[|f(x)-f(y)|_{\infty}=\frac{|x-y|_{\infty}}{|Q_N^2|_\infty}.\]
Demonstrating that $f$ is a bi-Lipschitz map.
Note that
$$\begin{aligned}
  &\{x\in I: \deg A_{n+1}(x)+\cdots+\deg A_{n+k}(x)\ge M, n\ge N\}\\
  =&\bigcup\limits_{\deg A_1\ge1, \ldots,\deg A_N\ge1}I(A_1,\ldots,A_N)\cap T^{-N}\mathcal{G}_k(M).
\end{aligned}
$$
Since Hausdorff dimension is countably stable and invariant under a bi-Lipschitz map, we have
\[\dim_{\mathcal{H}}\{x\in I: \deg A_{n+1}(x)+\cdots+\deg A_{n+k}(x)\ge M, n\ge N\}=\dim_{\mathcal{H}}\mathcal{G}_k(M) .\]
 So
\[\dim_{\mathcal{H}}\mathcal{G}_k(\Phi)\le \dim_{\mathcal{H}}\mathcal{G}_k(M) \]
for any $M\ge k$. By Lemma \ref{upper bound 1/2}, it follows that \[\dim_{\mathcal{H}}\mathcal{G}_k(\Phi)\le 1/2.\]
For the lower bound, note that
\[ \mathcal{G}_1(\Phi)\subseteq \mathcal{G}_k(\Phi).\]
Since $\dim_{\mathcal{H}}\mathcal{G}_1(\Phi)=1/2$ in this case, we have $\dim_{\mathcal{H}}\mathcal{G}_k(\Phi)\ge 1/2$.
%
%
\medskip

\noindent {\bf Case 2.} $1<a<\infty$. Since
\[ \mathcal{G}_1(\Phi)\subseteq \mathcal{G}_k(\Phi),\]
 we have
\[\dim_{\mathcal{H}}\mathcal{G}_k(\Phi)\ge \dim_{\mathcal{H}} \mathcal{G}_1(\Phi)=\frac{1}{1+a}.\]
For any $\epsilon>0$,
\[\mathcal{G}_k(\Phi)\subseteq \mathcal{F}_k(\Psi)\]
with $\Psi(n)=(a-\epsilon)^n$.
 By Theorem \ref{main-dimension-thm}, we have
$$\dim_{\mathcal{H}}\mathcal{G}_k(\Phi)\le \dim_{\mathcal{H}}\mathcal{F}_k(\Psi)=\frac{1}{1+a-\epsilon}.$$
Letting $\epsilon\to 0$, the conclusion follows in this case.
\medskip

\noindent {\bf Case 3.} $a=\infty$. Then for any $M>1$, we have
\[\mathcal{G}_k(\Phi)\subseteq  \mathcal{F}_k(\Psi)\]
with $\Psi(n)=M^n$. By Theorem \ref{main-dimension-thm}, it follows that
\[\dim_{\mathcal{H}}\mathcal{G}_k(\Phi)\le \mathcal{F}_k(\Psi)=\frac{1}{1+M}.\]
Letting $M\to \infty$, we get the conclusion.
\end{proof}

\providecommand{\bysame}{\leavevmode\hbox to3em{\hrulefill}\thinspace}
\providecommand{\MR}{\relax\ifhmode\unskip\space\fi MR }
\providecommand{\MRhref}[2]{%
  \href{http://www.ams.org/mathscinet-getitem?mr=#1}{#2}
}
\providecommand{\href}[2]{#2}

 \end{document}